 \DeclareMathAlphabet{\mathpzc}{OT1}{pzc}{m}{it}
\numberwithin{equation}{section}
\theoremstyle{definition}
\newtheorem*{theoA}{Theorem A}
\newtheorem*{theoB}{Theorem B}
\newtheorem*{theoC}{Theorem C}
\newtheorem*{theoD}{Theorem D}
\newtheorem*{theoE}{Theorem E}
\newtheorem{theo}{Theorem}[section]
\newtheorem{lem}{Lemma}[section]
\newtheorem{cor}{Corollary}[section]
\newtheorem{exm}{Example}[section]
\newtheorem{defi}{Definition}[section]
\newtheorem{rem}{Remark}[section]
\newtheorem{question}{Question}[section]
\newcommand{\ol}{\overline}
\newcommand{\be}{\begin{equation}}
\newcommand{\ee}{\end{equation}}
\newcommand{\beas}{\begin{eqnarray*}}
\newcommand{\eeas}{\end{eqnarray*}}
\newcommand{\bea}{\begin{eqnarray}}
\newcommand{\eea}{\end{eqnarray}}
\numberwithin{equation}{section}
\newcommand {\Jimsocity}{\hspace*{-0.6 cm}\noindent \textit{Journal of the Indian Math. Soc.\hfill{DOI. 10.18311/jims/2018/16578}\\
Vol. 85, Nos. (1 - 2),\,\,(2018),\,\,1--15.}\\}
\begin{document}

\thispagestyle{empty}
\Jimsocity
\begin{center}
    (Final version is available at http://www.informaticsjournals.com/index.php/jims)
\end{center}
\title
[On the uniqueness of power of a meromorphic function...]
{\scshape \Large \bf {On the uniqueness of power of a meromorphic function sharing a set with its $k-$th derivative}}

\author [A. Banerjee and B. Chakraborty]{\bf  Abhijit Banerjee  and Bikash Chakraborty }
\subjclass[2010]{30D35.}
\thanks{\raggedright{
{\bf Key words and phrases: Meromorphic function, set sharing, uniqueness.}}
\\ISSN 0019--5839  \hspace{0.3 cm}
\hfill{}\hspace{1 in}\hfill{ \textit{\large \copyright\,\,{Indian Mathematical Society, 2017}}}}
\begin{abstract}
In the existing literature, many researchers consider the uniqueness of the power of a meromorphic function with its derivative counterpart share certain values or small functions. Here we consider the same problem  under the aegis of a more general settings namely set sharing.
 \end{abstract}
 \maketitle

\section{Introduction and Definitions}
In 1925, R. Nevanlinna developed a systematic study of the value distribution theory by means of his First and Second Fundamental Theorems. In this paper, we assume that readers familiar with value distribution theory (\cite{3}).\par
By $\mathbb{C}$ and $\mathbb{N}$, we mean the set of complex numbers and the set of positive integers respectively. Let $f$ and $g$ be two non-constant meromorphic functions and let $a$ be a finite complex number. We say that $f$ and $g$ share the value $a$ CM (counting multiplicities), provided that $f-a$ and $g-a$ have the same zeros with the same multiplicities. Similarly, we say that $f$ and $g$ share the value $a$ IM (ignoring multiplicities), provided that $f-a$ and $g-a$ have the same set of zeros, where the multiplicities are not taken into account. In addition, we say that $f$ and $g$ share $\infty$ CM (resp. IM), if $1/f$ and $1/g$ share $0$ CM (resp. IM).\par
Now we recall the notion of weighted sharing which appeared in the literature in 2001 (\cite{4}) as this definition paves the way for future discussions as far as relaxation of sharing is concerned.
\begin{defi} (\cite{4}) Let $k$ be a non-negative integer or infinity. For $a\in\mathbb{C}\cup\{\infty\}$, we denote by $E_{k}(a;f)$ the set of all $a$-points of $f$, where an $a$-point of multiplicity $m$ is counted $m$ times if $m\leq k$ and $k+1$ times if $m>k$.\par
We say that $f$ and $g$ share the value $a$ with weight $k$ if $E_{k}(a;f)=E_{k}(a;g)$.
\end{defi}
We write $f$ and $g$ share $(a,k)$ to mean that $f$ and $g$ share the value $a$ with weight $k$. Clearly, if $f$ and $g$ share $(a,k)$, then also $f$ and $g$ share $(a,p)$ for any integer $p$ with $0\leq p<k$. Also we note that $f$ and $g$ share a value $a$ CM (resp. IM) if and only if $f$ and $g$ share $(a,\infty)$ (resp. $(a,0)$).\par
\begin{defi} Let $S$ be a set of distinct elements of $\mathbb{C}\cup\{\infty\}$ and $k$ be a non-negative integer or $\infty$. We denote by $E_{f}(S,k)$ the set $\bigcup\limits_{a\in S}E_{k}(a;f)$.\par
We say that $f$ and $g$ share the set $S$ with weight $k$ if $E_{f}(S,k)=E_{g}(S,k)$.
\end{defi}
\begin{defi} A set $S\subset \mathbb{C}\cup\{\infty\}$ is called a unique range set for meromorphic (resp. entire) functions with weight $k$, in short, $URSM_{k}$ (resp. $URSE_{k}$), if for any two non-constant meromorphic (resp. entire) functions $f$ and $g$ the condition $E_{f}(S,k)=E_{g}(S,k)$ implies $f\equiv g$.
\end{defi}
Next we recall following two definitions which have been used in this paper.
\begin{defi} (\cite{2}) Let $z_{0}$ be a zero of $f-a$ of multiplicity $p$ and a zero of $g-a$ of multiplicity $q$.
\begin{enumerate}
\item [i)] By $\ol N_{L}(r,a;f)$, we denote the reduced counting function of those $a$-points of $f$ and $g$ where $p>q\geq 1$,
\item [ii)]by $N^{1)}_{E}(r,a;f)$, we denote the counting function of those $a$-points of $f$ and $g$ where $p=q=1$ and
\item [iii)] by $\ol N^{(2}_{E}(r,a;f)$, we denote the reduced counting function of those $a$-points of $f$ and $g$ where $p=q\geq 2$.
\end{enumerate}
In the same way, one can define $\ol N_{L}(r,a;g),\; N^{1)}_{E}(r,a;g),\; \ol N^{(2}_{E}(r,a;g).$
\end{defi}
\begin{defi} (\cite{2}) Let $f$ and $g$ share a value $(a,0)$. Then by $\ol N_{*}(r,a;f,g)$, we denote the reduced counting function of those $a$-points of $f$ whose multiplicities differ from the multiplicities of the corresponding $a$-points of $g$.\\
Thus $\ol N_{*}(r,a;f,g)\equiv\ol N_{*}(r,a;g,f)$ and $\ol N_{*}(r,a;f,g)=\ol N_{L}(r,a;f)+\ol N_{L}(r,a;g)$.
\end{defi}
For the last few decades, the uniqueness theory of entire and meromorphic functions has been grown up as an important subfield of the value distribution theory. One of the prominent branch of the uniqueness literature is to consider the uniqueness of a meromorphic functions and its derivative sharing a small function. The inception of this particular field is due to Rubel-Yang (\cite{7}).\par
In 1977, Rubel-Yang (\cite{7}) proved that if a non-constant entire function $f$ and $f^{'}$ share two distinct finite numbers $(a,\infty)$, $(b,\infty)$, then $f = f^{'}$.\par
In 1979, further improvement in this direction was obtained by Mues-Steinmetz(\cite{6}) in this following manner:
\begin{theoA}(\cite{6}) Let $f$ be a non-constant entire function. If $f$ and $f^{'}$ share two distinct values $(a,0)$, $(b,0)$, then $f^{'}\equiv f$.
\end{theoA}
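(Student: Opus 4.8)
The plan is to argue by contradiction: suppose $f'\not\equiv f$ and derive that $f$ cannot be a non-constant entire function. The two workhorses are Nevanlinna's Second Fundamental Theorem and the lemma on logarithmic derivatives. Since $f$ is entire, $\overline N(r,\infty;f)=0$, so the Second Fundamental Theorem for $f$ with the values $a,b,\infty$ gives $T(r,f)\le\overline N(r,a;f)+\overline N(r,b;f)+S(r,f)$, and similarly for $f'$. Because $f$ and $f'$ share $(a,0)$ and $(b,0)$, the zero sets of $f-a$ and $f'-a$ coincide, as do those of $f-b$ and $f'-b$; hence $\overline N(r,a;f)=\overline N(r,a;f')$, $\overline N(r,b;f)=\overline N(r,b;f')$, and \emph{every} $a$-point and $b$-point of $f$ is a zero of $f'-f$. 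Writing $f'-f=f\bigl(\tfrac{f'}{f}-1\bigr)$, the logarithmic derivative lemma and $N(r,\infty;f'-f)=0$ give $T(r,f'-f)=m(r,f'-f)\le T(r,f)+S(r,f)$.

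The first substantive step is to squeeze these facts into numerical rigidity. Since $a\ne b$, the $a$- and $b$-points of $f$ are distinct and each contributes at least once to $N(r,0;f'-f)$, so
\[
T(r,f)-S(r,f)\le\overline N(r,a;f)+\overline N(r,b;f)\le N(r,0;f'-f)\le T(r,f'-f)\le T(r,f)+S(r,f),
\]
forcing every inequality here to be an equality up to $S(r,f)$. Hence $\overline N(r,a;f)+\overline N(r,b;f)=T(r,f)+S(r,f)$, and $f'-f$ vanishes to exact order one at each $a$- and $b$-point of $f$, with essentially no other zeros. When $a,b\ne 0$ these $a$- and $b$-points are automatically simple (at a multiple one $f'$ would vanish, hence could not take the value $a$ or $b$); when $0\in\{a,b\}$, say $a=0$, the sharing instead forces every zero of $f$ to be multiple, so that $\overline N(r,0;f)\le\tfrac12N(r,0;f)$, which must be carried through the chain above. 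Running the same estimates with $f'$ in place of $f$ gives $T(r,f')\asymp T(r,f)$, so $S(r,f')=S(r,f)$; and adjoining any value $c\notin\{a,b\}$ to the Second Fundamental Theorem for $f$ even shows $\overline N(r,c;f)=T(r,f)+S(r,f)$, so $f$ has no finite deficient value other than possibly $a$ and $b$.

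The second step is to turn the common-point structure into an identity. Put $\alpha:=(f'-a)/(f-a)$ and $\beta:=(f'-b)/(f-b)$; by the previous step these are entire functions whose zeros and value-$1$ points are essentially confined to the $a$- and $b$-points of $f$. From $f'=a+\alpha(f-a)=b+\beta(f-b)$ one reads off $\alpha(f-a)-\beta(f-b)=b-a$, and from $f'-f=(\alpha-1)(f-a)=(\beta-1)(f-b)$ one gets $(f-b)/(f-a)=(\alpha-1)/(\beta-1)$, so $f$ is a rational expression in $\alpha$ and $\beta$; differentiating the linear relation $f'=\alpha f+a(1-\alpha)$ and its $\beta$-analogue and eliminating $f$ produces a differential relation purely in $\alpha$ and $\beta$. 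The plan is to feed the rigidity of the first step --- the tight counting, the simplicity of the $a$- and $b$-points, the control of $S(r,f')$ --- into this relation together with the Second Fundamental Theorem applied to $\alpha$ and $\beta$, and thereby force $\alpha\equiv\beta$. Since $\alpha-\beta=(a-b)(f'-f)/\bigl((f-a)(f-b)\bigr)$, this yields $f'\equiv f$, contradicting the standing assumption; hence $f'\equiv f$.

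I expect the genuine obstacle to be this last step, which is where the weight-$0$ (IM) hypothesis really bites. Under CM sharing, $\alpha$ and $\beta$ would be nonvanishing exponentials and the relation $\alpha(f-a)-\beta(f-b)=b-a$ is quickly analyzed; under IM sharing $\alpha$ and $\beta$ may also vanish at $a$- and $b$-points of excess multiplicity, and the real work is to extract from the differential relation that the exponential parts of $\alpha$ and $\beta$ have proportional logarithmic derivatives and are therefore constants, after which comparing the $b$-point data forces $\alpha\equiv\beta$. The bookkeeping that makes this possible is precisely the local leading-coefficient information at the (now simple) $a$- and $b$-points supplied by the squeezing step, and the borderline case $0\in\{a,b\}$ --- where zeros of $f$ can be multiple, forcing a modified auxiliary function such as $f'(f'-f)/\bigl((f-a)(f-b)\bigr)$ in place of $(f'-f)/\bigl((f-a)(f-b)\bigr)$ --- has to be carried through separately.
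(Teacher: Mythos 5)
This statement is Theorem A of Mues--Steinmetz, which the paper only quotes from \cite{6} and does not prove, so there is no in-paper argument to compare against; your proposal has to stand on its own. Its first half does: the chain
\[
T(r,f)\le\overline N(r,a;f)+\overline N(r,b;f)+S(r,f)\le N\left(r,0;f'-f\right)\le T(r,f'-f)\le T(r,f)+S(r,f)
\]
is correct under the contradiction hypothesis $f'\not\equiv f$, and the consequences you draw (the zeros of $f'-f$ are essentially the $a$- and $b$-points, each simple; the $a$- and $b$-points of $f$ are simple when $ab\ne0$; $T(r,f')=T(r,f)+S(r,f)$) are all standard and sound.

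The genuine gap is the entire second half. Everything after ``The plan is to feed the rigidity \dots'' is a declaration of intent, not an argument: you never actually derive $\alpha\equiv\beta$ from the differential relation, and you yourself flag this step as ``the genuine obstacle'' and ``the real work.'' But that step \emph{is} the theorem. The Nevanlinna bookkeeping of your first step is, by itself, consistent with $f'\not\equiv f$; the counting identities you establish hold, for example, in the intermediate configurations that Mues and Steinmetz must laboriously exclude (such as $f'-a=c(f-a)$ type degeneracies, or the two auxiliary quotients having nontrivial nonvanishing exponential factors). Their actual proof introduces specific auxiliary functions built from $f''$, $f'-f$, $(f'-a)(f'-b)$ and $(f-a)(f-b)$, shows via the local expansions at the (simple) shared points that these functions are entire with characteristic $S(r,f)$, and then forces them to be constants whose values can be pinned down --- none of which is present in your sketch. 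Until you exhibit the differential relation in $\alpha$ and $\beta$ explicitly and show concretely how it forces $\alpha\equiv\beta$ (including the separate treatment of $0\in\{a,b\}$ that you correctly anticipate), the proposal is a framework for a proof rather than a proof.
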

To find the specific form of the function, Yang-Zhang (\cite{7.1}) were the first to subtly investigate the value sharing of  a power of a meromorphic (resp. entire) function $F = f^{m}$ and its derivative $F'$.  Yang-Zhang (\cite{7.1}) proved the following theorem:
\begin{theoB}(\cite{7.1}) Let $f$ be a non-constant meromorphic (resp. entire) function and $m>12$ (resp. $7$) be an integer. If $F$ and $F'$ share $1$ CM, then $F = F'$ and f assumes the form $f(z) = ce^{\frac{z}{m}}$, where $c$ is a non-zero constant.
\end{theoB}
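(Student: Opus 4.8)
The plan is to prove the statement in two stages: first, that the hypothesis forces $F\equiv F'$; second, that $F\equiv F'$ determines $f$ up to the stated form. The second stage is a one-line computation: $(f^{m})'\equiv f^{m}$ gives, after division by $f^{m-1}$ (legitimate since $f\not\equiv 0$), the identity $m f'/f\equiv 1$; integrating, $f(z)=c\,e^{z/m}$ for a non-zero constant $c$, and then $F=c^{m}e^{z}=F'$. So the content is the implication that, for $F=f^{m}$, sharing $1$ CM with $F'$ forces $F\equiv F'$ --- and this is where the lower bound on $m$ enters. Assume, for contradiction, $F\not\equiv F'$. Two structural facts are then available. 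First, \emph{every $1$-point of $F$ is simple}: if $z_{0}$ were a zero of $F-1$ of multiplicity $\ge 2$ then $F'(z_{0})=0$, while CM-sharing forces $z_{0}$ to be a zero of $F'-1$, hence $F'(z_{0})=1$ --- a contradiction; consequently $\ol N(r,1;F)=N(r,1;F)$ and $\ol N^{(2}_{E}(r,1;F)=\ol N_{*}(r,1;F,F')=0$. Second, since $F=f^{m}$, every zero (resp.\ pole) of $F$ has multiplicity at least $m$, so
\[
\ol N(r,0;F)=\ol N(r,0;f)\le \tfrac1m T(r,F)+O(1),\qquad \ol N(r,\infty;F)=\ol N(r,\infty;f)\le \tfrac1m T(r,F)+O(1),
\]
the latter being identically zero when $f$ is entire.

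Next I would introduce the standard Hayman-type function comparing $F$ with $F'$ (so that $(F')'=F''$ and $(F')''=F'''$):
\[
H=\left(\frac{F''}{F'}-\frac{2F'}{F-1}\right)-\left(\frac{F'''}{F''}-\frac{2F''}{F'-1}\right),
\]
and split into two cases. If $H\not\equiv 0$, the lemma on logarithmic derivatives gives $m(r,H)=S(r,F)$, while $H$ has at most simple poles, lying only among the poles of $F$, the zeros of $F'$ and the zeros of $F''$ (the multiple common $1$-points, which would otherwise contribute, have just been excluded), and a short local computation shows $H$ vanishes at every simple common $1$-point. Hence
\[
\ol N(r,1;F)\le N(r,0;H)\le T(r,H)+O(1)\le \ol N(r,\infty;F)+\ol N(r,0;F')+\ol N(r,0;F'')+S(r,F).
\]
Using $F'=m f^{m-1}f'$ and $F''=m f^{m-2}\left((m-1)(f')^{2}+f f''\right)$, together with the first fundamental theorem and the estimate $m(r,\psi)=S(r,f)$ for logarithmic-derivative expressions $\psi$ (such as $f'/f$ and $f''/f$), one bounds $\ol N(r,0;F')$ and $\ol N(r,0;F'')$ by small multiples of $T(r,f)=\tfrac1m T(r,F)$, with smaller multiples in the entire case because the pole contributions of $f$ drop out. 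Feeding these and the estimates of the first paragraph into the second fundamental theorem for $F$ with targets $0,1,\infty$ gives $T(r,F)\le \tfrac{c}{m}T(r,F)+S(r,F)$ for an explicit constant $c$, which is absurd when $m>12$ in the meromorphic case and when $m>7$ in the entire case.

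If instead $H\equiv 0$, integrating the identity twice produces a M\"obius relation $\tfrac{1}{F-1}=\tfrac{A}{F'-1}+B$ with constants $A\ne 0$ and $B$. A short analysis --- evaluating the relation at the poles and at the (high-multiplicity) zeros of $F=f^{m}$, and applying Picard's theorem to the value $1+1/B$ that $F$ would otherwise omit, whose $m\ge 3$ distinct $m$-th roots $f$ would then have to omit --- shows $B=0$. Then $F'-1=A(F-1)$, and solving the linear equation $F'=AF+1-A$ gives $F=c\,e^{Az}+1-\tfrac1A$, whose zeros are all simple unless $A=1$; since every zero of $f^{m}$ has multiplicity $\ge m\ge 2$, this forces $A=1$, i.e.\ $F\equiv F'$, contradicting our assumption, and the proof is complete.

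The step I expect to be the real obstacle is the bookkeeping in the case $H\not\equiv 0$: one must track carefully which zeros of $f$, $f'$ and $f''$ coincide and control the pole contributions in the meromorphic setting, so that the constant $c$ above comes out small enough to contradict $T(r,F)$ precisely for $m>12$ (resp.\ $m>7$) rather than only for some larger threshold. The two observations that make this sharpness possible --- that $F=f^{m}$ has no multiple $1$-points, and that its zeros and poles are $\tfrac1m$-deficient --- are the conceptual core of the argument.
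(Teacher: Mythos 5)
First, a point of reference: the paper contains no proof of Theorem~B. It is quoted verbatim from Yang--Zhang (reference \cite{7.1}) as background, so there is no in-paper argument to measure your attempt against; your proposal has to stand on its own. Structurally it does: the reduction of the final claim to $F\equiv F'$, the two key observations (that under the sharing hypothesis every $1$-point of $F=f^{m}$ is simple, and that the zeros and poles of $F$ are $\tfrac1m$-deficient), the auxiliary function $H$ built from $F$ and $G=F'$, and the dichotomy $H\not\equiv 0$ versus $H\equiv 0$ are exactly the standard machinery (the same machinery this paper deploys for its own theorems), and the pieces you do carry out are correct.

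The genuine gap is the one you flag yourself: in the case $H\not\equiv 0$ you never compute the constant $c$ in $T(r,F)\le \frac{c}{m}T(r,F)+S(r,F)$, and the entire content of Theorem~B is that $c$ may be taken to be $12$ in the meromorphic case and $7$ in the entire case. As written, your argument establishes only ``$F\equiv F'$ for all sufficiently large $m$.'' The bookkeeping is not routine: $\ol N(r,0;F')$ and $\ol N(r,0;F'')$ must be estimated through $N(r,0;f')$ and $N\bigl(r,0;(m-1)(f')^{2}+ff''\bigr)$, each of which costs a full $T(r,f)$ plus pole terms from the logarithmic derivative lemma, and a naive count overshoots $12$; reaching the stated thresholds requires reusing the $-N_{0}(r,0;F')$ term of the second fundamental theorem and carefully separating zeros of $f$ from zeros of $f'$ and $f''$. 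A secondary loose end sits in the case $H\equiv 0$: your Picard argument on the omitted value $1+1/B$ silently assumes its $m$-th roots are distinct, which fails precisely when $B=-1$ (omitted value $0$); if in addition $F$ is pole-free and zero-free, neither of your two devices for forcing $B=0$ applies, and that subcase needs a separate treatment. Until the constant is computed and that subcase closed, the theorem as stated is not proved.
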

In this direction, Zhang (\cite{8}) further improved the above result in the following manner:
\begin{theoC}(\cite{8})  Let $f$ be a non-constant entire function, $m$ and $k$ be positive integers and $a(z)(\not\equiv 0,\infty)$ be a small function with respect to $f$. Suppose $f^{m} -a$ and $(f^{m})^{(k)}-a$ share the value $0$ CM and $m > k + 4$.
Then $f^m\equiv (f^m)^{(k)}$ and $f$ assumes the form $f(z) = ce^{\frac{\lambda}{m}z}$ where $c$ is a non-zero constant and $\lambda^k = 1$.
\end{theoC}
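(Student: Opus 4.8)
The plan is to set $F=f^{m}$, which is entire since $f$ is, to prove first that $F^{(k)}\equiv F$, and then to solve that differential equation for $f$. Outside the zeros and poles of $a$ (negligibly many, as $a$ is small), the entire functions $F-a$ and $F^{(k)}-a$ have exactly the same zeros counting multiplicity, so their quotient is a zero-free entire function; hence there is an entire function $\gamma$ with
\[
F^{(k)}-a=e^{\gamma}(F-a),\qquad\text{equivalently}\qquad e^{\gamma}-1=\frac{F^{(k)}-F}{F-a}.
\]
Since $e^{\gamma}-1$ is entire, $T(r,e^{\gamma})=m(r,e^{\gamma}-1)+O(1)$. The first and main task is to prove $e^{\gamma}\equiv1$.

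For this I would combine three Nevanlinna estimates. (i) The Second Fundamental Theorem for the three small functions $0,a,\infty$, using $\ol N(r,\infty;F)=0$, gives $T(r,F)\le\ol N(r,0;f)+\ol N(r,0;F-a)+S(r,F)$. (ii) Since $F=f^{m}$, a zero of $f$ of multiplicity $p$ is a zero of $F$ of multiplicity $mp$ and — as $m>k$ — a zero of $F^{(k)}$ of multiplicity $mp-k$; away from the zeros and poles of $a$ it is therefore, from the identity above, a zero of $e^{\gamma}-1$ of multiplicity exactly $mp-k\ge m-k$, so that $(m-k)\,\ol N(r,0;f)\le N(r,0;e^{\gamma}-1)+S(r,F)\le T(r,e^{\gamma})+S(r,F)$. (iii) Factoring $e^{\gamma}-1=\dfrac{F^{(k)}-F}{F}\cdot\dfrac{F}{F-a}$ and estimating the first factor by the lemma on the logarithmic derivative and the second by $m\!\left(r,\dfrac{F}{F-a}\right)\le T(r,F)-N(r,0;F-a)+S(r,F)$, one obtains $T(r,e^{\gamma})\le T(r,F)-\ol N(r,0;F-a)+S(r,F)$. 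Feeding (iii) into (ii) and then (i) into the result makes everything telescope to $(m-k-1)\,\ol N(r,0;f)\le S(r,F)$; since $m>k+4$ gives $m-k-1\ge4>0$, we conclude $\ol N(r,0;f)=S(r,F)$. Putting this back into (i) yields $\ol N(r,0;F-a)=T(r,F)+S(r,F)$, and then (iii) forces $T(r,e^{\gamma})=S(r,F)$, i.e.\ $e^{\gamma}$ is a small function. I expect this telescoping step — in particular the multiplicity computation in (ii) and keeping careful track of exactly where the hypothesis on $m$ is consumed — to be the main obstacle.

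From $\ol N(r,0;f)=S(r,F)$ we get $N(r,f'/f)=\ol N(r,0;f)=S(r,F)$, so $f'/f$, hence $F'/F=mf'/f$, hence each $F^{(j)}/F$ (a polynomial in $F'/F$ and its derivatives), is a small function. Rewrite the identity as $F^{(k)}=e^{\gamma}F+a(1-e^{\gamma})$ and divide by $F$: $\dfrac{F^{(k)}}{F}-e^{\gamma}=\dfrac{a(1-e^{\gamma})}{F}$, whose left side is small. If $e^{\gamma}\not\equiv1$ then $a(1-e^{\gamma})\not\equiv0$ is small, so $\dfrac1F=\dfrac{a(1-e^{\gamma})/F}{a(1-e^{\gamma})}$ would also be small — impossible, since $T(r,1/F)=T(r,F)+O(1)\to\infty$. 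Hence $e^{\gamma}\equiv1$, that is $(f^{m})^{(k)}\equiv f^{m}$, the first assertion of the theorem.

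Finally, solving the constant-coefficient linear equation $(f^{m})^{(k)}=f^{m}$ gives $f^{m}=\sum_{j=0}^{k-1}c_{j}e^{\omega_{j}z}$, where $\omega_{0},\dots,\omega_{k-1}$ are the $k$-th roots of unity. Here $f^{m}$ has finite order, so $S(r,F)=O(\log r)$; but if two or more $c_{j}$ were nonzero, $f^{m}$ would be an order-one exponential polynomial that is not a constant multiple of a single exponential, and such a function has $\ol N(r,0;\cdot)$ of order $r$ — contradicting $\ol N(r,0;f^{m})=\ol N(r,0;f)=S(r,F)$. So exactly one term survives: $f^{m}=c\,e^{\lambda z}$ with $\lambda$ a $k$-th root of unity, i.e.\ $\lambda^{k}=1$. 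Since an entire $m$-th root of $c\,e^{\lambda z}$ is a constant multiple of $e^{\lambda z/m}$, we get $f(z)=c_{0}e^{\frac{\lambda}{m}z}$ with $\lambda^{k}=1$, as asserted. (The one piece of bookkeeping suppressed throughout is the routine verification that the zeros and poles of the small function $a$ only ever produce $S(r,F)$-terms.)
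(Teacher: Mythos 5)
The paper does not actually prove this statement: Theorem C is quoted from Zhang \cite{8} purely as background, so there is no in-paper argument to measure yours against. Judged on its own, your reconstruction is essentially correct and follows the standard Nevanlinna-theoretic route: the second fundamental theorem for the three small targets $0,a,\infty$ applied to $F=f^{m}$, the multiplicity count $(m-k)\,\ol N(r,0;f)\le N(r,0;e^{\gamma}-1)+S(r,F)$ (which is where $m>k$ is really used, since a zero of $f$ of order $p$ kills $F$ to order $mp$ but $F^{(k)}$ only to order $mp-k$), and the factorization of $e^{\gamma}-1$ estimated via the lemma on the logarithmic derivative plus the first fundamental theorem. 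The telescoping to $(m-k-1)\ol N(r,0;f)\le S(r,F)$ is sound, the deduction that $f'/f$ and hence $F^{(k)}/F$ are small correctly forces $e^{\gamma}\equiv 1$, and the endgame (solving $F^{(k)}=F$ and discarding exponential sums with two or more terms because they have $N(r,0;\cdot)\asymp r$) is standard. Two remarks. First, your argument only consumes $m\ge k+2$, noticeably less than the stated $m>k+4$; that is not a defect (a proof under weaker hypotheses covers the stated ones, and Zhang's bound is known not to be sharp in the entire case), but you should say so explicitly rather than leave the reader hunting for where the rest of the hypothesis went. Second, the identification of the quotient with $e^{\gamma}$ is literally valid only for entire $a$; for a meromorphic small function $a$ one must work with $h=(F^{(k)}-a)/(F-a)$ satisfying $N(r,h)+N(r,1/h)=S(r,F)$ and adjust $T(r,h)=m(r,h)+S(r,F)$ accordingly — you flag this as routine bookkeeping, and it is, but it is used silently in the exact-multiplicity claim and in $N(r,0;h-1)\le T(r,h)+O(1)$, so it deserves one explicit sentence.
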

\begin{theoD}(\cite{8})  Let $f$ be a non-constant meromorphic function, $m$
 and $k$ be positive integers and $a(z)(\not\equiv 0,\infty)$ be a small function with respect to $f$. Suppose $f^{m}-a$ and $(f^{m})^{(k)}-a$ share the value $0$ CM and $(m-k-1)(m-k-4) > 3k + 6$.
Then the conclusion of {\em Theorem C} holds.
\end{theoD}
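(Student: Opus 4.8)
\medskip

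\noindent\textbf{Proof strategy for Theorem D.}
The plan is to reduce everything to showing $F^{(k)}\equiv F$ for $F:=f^{m}$, then read off the form of $f$. Put $G:=(f^{m})^{(k)}=F^{(k)}$; the hypothesis says $F-a$ and $G-a$ share $0$ CM, i.e.\ $F$ and $G$ share the small function $a$ CM. (For brevity take $a$ a nonzero constant; the small-function case differs only in routine technicalities.) Since $T(r,F)=m\,T(r,f)+O(1)$ and $T(r,G)=O(T(r,F))$, write $S(r)$ for any quantity that is $o(T(r,f))$ outside a set of finite linear measure. Two features of $F=f^{m}$ are used throughout: every zero and pole of $F$ has multiplicity $\ge m$, so
\[
\ol N(r,0;F)\le\tfrac1m\,T(r,F)+O(1),\qquad \ol N(r,\infty;F)\le\tfrac1m\,T(r,F)+O(1);
\]
and, as the hypothesis forces $m>k+4$, at a zero of $F$ of order $s\ (\ge m)$ the function $G=F^{(k)}$ has a zero of order $s-k\ (\ge m-k)$, while at a pole of $F$ of order $p\ (\ge m)$ it has a pole of order $p+k$. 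Then introduce the standard auxiliary function associated with CM--sharing,
\[
H=\Big(\frac{F''}{F'}-\frac{2F'}{F-a}\Big)-\Big(\frac{G''}{G'}-\frac{2G'}{G-a}\Big).
\]
By the lemma on logarithmic derivatives $m(r,H)=S(r)$; the poles of $H$ are simple and occur only at poles and zeros of $F$ and at zeros of $F'$, of $G'$, or of $G$ not lying at $a$--points or zeros of $F$; and, since the $a$--points of $F$ and $G$ coincide with equal multiplicities, $H$ vanishes at every simple common $a$--point. Hence $N^{1)}(r,a;F)\le N(r,0;H)+S(r)\le T(r,H)+S(r)=\ol N(r,H)+S(r)$.

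\medskip

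\noindent\textit{Case $H\equiv0$.} Integrating twice gives $\dfrac1{F-a}=\dfrac{A}{G-a}+B$ for constants $A\ (\ne0)$, $B$. If $B\ne0$, this rearranges to $F=c_{1}-\dfrac{c_{2}}{G-b}$ with $c_{2}\ne0$; a pole of $F$ would force $G=F^{(k)}$ to have a higher--order pole, which cannot create a pole of $c_{2}/(G-b)$, so $F$ is entire, and then the same comparison shows $F$ omits $c_{1}$. If $c_{1}\ne0$ then $f^{m}-c_{1}$ is zero--free, so $f$ omits $m\ (\ge3)$ distinct finite values, contradicting Picard's theorem; if $c_{1}=0$ then $F=f^{m}$ is entire and zero--free, whence $f'/f$ is small and a short computation from $F^{(k)}=b-c_{2}/F$ forces $T(r,F)=S(r)$, a contradiction. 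Hence $B=0$, i.e.\ $F^{(k)}=AF+(1-A)a$. Comparing poles ($k\ge1$) makes $F$ entire; evaluating at a zero of $F$, where the left side vanishes while the right side equals $(1-A)a$, forces $A=1$ or $F$ zero--free, and in the latter case $F=e^{\alpha}$ with $\alpha'$ small, so $F^{(k)}/F=A+(1-A)a/F$ gives $T(r,1/F)=S(r)$, impossible. Therefore $A=1$ and $F^{(k)}\equiv F$.

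\medskip

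\noindent\textit{Case $H\not\equiv0$.} This is where the numerical hypothesis enters, and is the only real obstacle. Apply the second fundamental theorem to $F$ for the targets $0,a,\infty$ with the ramification term, split $\ol N(r,a;F)=N^{1)}(r,a;F)+\ol N_{(2}(r,a;F)$, bound $N^{1)}(r,a;F)\le\ol N(r,H)+S(r)$, estimate $\ol N_{(2}(r,a;F)$ using that every multiple $a$--point is a common multiple zero of $F'$ and $G'$, bound $\ol N(r,H)$ by its poles and cancel the zeros of $F'$ against the ramification term, and finally insert $\ol N(r,0;F),\ \ol N(r,\infty;F)\le\tfrac1m T(r,F)$ together with the fact that the surviving zeros of $G'=F^{(k+1)}$ lie at zeros of $F$ and hence are counted with the factor $\tfrac1{m-k}$ (using, where needed, the second fundamental theorem for $G$ to handle $\ol N(r,a;G)=\ol N(r,a;F)$ and the poles of $G$). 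Collecting the coefficient of $T(r,F)$ produces an inequality $\big(1-\Theta(m,k)\big)T(r,F)\le S(r)$ in which $\Theta(m,k)<1$ exactly when $(m-k-1)(m-k-4)>3k+6$, a contradiction. The delicate point is this bookkeeping---arranging the cancellations so that no factor worse than $\tfrac1m$ or $\tfrac1{m-k}$ survives---and it is precisely what pins down the stated threshold.

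\medskip

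\noindent\textit{Conclusion.} From $F^{(k)}\equiv F$: a pole of $F$ of order $p$ would give $F^{(k)}$ a pole of order $p+k>p$, so $F$ is entire; a zero of $F$ of order $s\ (\ge m>k)$ would give $F^{(k)}$ a zero of order $s-k<s$, so $F$ is zero--free. Thus $F$ is an entire zero--free solution of $y^{(k)}=y$, so $F=\sum_{j=0}^{k-1}c_{j}e^{\lambda_{j}z}$ with $\lambda_{j}=e^{2\pi ij/k}$; being zero--free, only one $c_{j}$ is nonzero (an exponential polynomial with two or more terms has zeros), so $F=c\,e^{\lambda z}$ with $c\ne0$, $\lambda^{k}=1$. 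Hence $f^{m}=c\,e^{\lambda z}$, so $f(z)=c_{0}\,e^{\frac{\lambda}{m}z}$ with $c_{0}^{m}=c$, $\lambda^{k}=1$; combined with $(f^{m})^{(k)}=F^{(k)}=F=f^{m}$, this is the conclusion of Theorem C.
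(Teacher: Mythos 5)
First, note that Theorem D is not proved in this paper at all: it is quoted verbatim from Zhang \cite{8} as background, so there is no internal proof to compare your argument against. Judged on its own merits, your proposal has a genuine gap at the one place where the theorem's specific hypothesis must be used.

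In the case $H\not\equiv 0$ you describe a single application of the second fundamental theorem to $F$ for the targets $0,a,\infty$, followed by ``collecting the coefficient of $T(r,F)$'' using only the weights $\tfrac1m$ (for $\ol N(r,0;F)$, $\ol N(r,\infty;F)$) and $\tfrac1{m-k}$ (for zeros of $G$ and $G'$ sitting over zeros of $F$), and you assert that the resulting deficiency sum is less than $1$ exactly when $(m-k-1)(m-k-4)>3k+6$. That cannot be right as stated: a one-pass estimate of this kind produces a condition that is \emph{linear} in $\tfrac1m$ and $\tfrac1{m-k}$, whereas the stated threshold is a \emph{product} of two linear factors in $m-k$. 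The product form is the signature of a two-stage bootstrap: one first uses an auxiliary function (of the type $U=\frac{F'}{F-1}-\frac{G'}{G-1}$ or $V=\frac{F'}{F(F-1)}-\frac{G'}{G(G-1)}$, exactly as in Lemmas 3.3 and 3.4 of the present paper) to bound $\ol N(r,0;f)$ or $\ol N(r,\infty;f)$ by a quantity with denominator roughly $m-k-1$, and only then substitutes that bound into the second-fundamental-theorem inequality, so that the two denominators multiply. You have not exhibited this mechanism, and you explicitly concede that the ``delicate bookkeeping'' which ``pins down the stated threshold'' is omitted; but that bookkeeping \emph{is} the theorem. As written, the argument establishes no inequality at all in the case $H\not\equiv0$, so the proof is incomplete precisely where the hypothesis $(m-k-1)(m-k-4)>3k+6$ has to do its work.

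The remainder is broadly sound: the $H\equiv0$ analysis (Mobius relation, elimination of $B\ne0$ and $A\ne1$ by pole/zero comparison and Picard), and the final step deducing $f=c_{0}e^{\lambda z/m}$ with $\lambda^{k}=1$ from $F^{(k)}\equiv F$ with $F$ entire and zero-free, are correct in outline (for the last step it is cleaner to note that a zero-free entire solution of $y^{(k)}=y$ has finite order, hence equals $e^{\alpha z+\beta}$ with $\alpha^{k}=1$, rather than to argue about zeros of exponential sums). Your reduction of the small function $a(z)$ to a constant also deserves at least a sentence of justification, since the Nevanlinna functionals for $F-a$ and $F-c$ differ by more than $O(1)$ in general.
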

In the same year, Zhang-Yang (\cite{9}) further improved {\it Theorem D} by reducing the lower bound of $m$.
\begin{theoE}(\cite{9})  Let $f$ be a non-constant meromorphic function, $n$ and $k$ be positive integers and $a(z)(\not\equiv 0,\infty)$ be a small function with respect to $f$. Suppose $f^{n}-a$ and $(f^{n})^{(k)}-a$ share the value $0$ IM and $n > 2k+3+\sqrt{(2k+3)(k+3)}$. Then $f^n\equiv (f^n)^{(k)}$ and $f$ assumes the form $f(z) = ce^{\frac{\lambda}{n}z}$ where $c$ is a non-zero constant and $\lambda^k = 1$.
\end{theoE}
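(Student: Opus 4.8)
The plan is to put $F=f^{n}$, reduce the statement to the identity $F\equiv F^{(k)}$, and then solve the resulting differential equation. Two structural facts drive everything. First, since $a$ is small with respect to $f$ it is small with respect to $F$, so $T(r,a)=S(r,f)=S(r,F)$ and every proximity term built from logarithmic derivatives of $F$, $F^{(k)}$ and $a$ is $S(r,f)$. Second, every zero and every pole of $F=f^{n}$ has multiplicity at least $n$, so $\ol N(r,0;F)\le\frac1n N(r,0;F)$, $\ol N(r,\infty;F)\le\frac1n N(r,\infty;F)$, and (for $n\ge k+1$) the truncated counting functions satisfy $N_{k+1}(r,0;F)\le\frac{k+1}{n}N(r,0;F)$, etc. It is this $\tfrac1n$ saving, played against the constants produced by differentiating $k$ times, that will yield the threshold on $n$.

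First I would introduce
\[
\Phi=\frac{(F^{(k)}-a)'}{F^{(k)}-a}-\frac{(F-a)'}{F-a},
\]
whose proximity function is $S(r,f)$ and all of whose poles are simple, occurring only at poles of $F$ and at common $a$-points of $F$ and $F^{(k)}$ whose multiplicities differ (at a common \emph{simple} $a$-point each term has a simple pole of residue $1$, so $\Phi$ is regular there). If $\Phi\equiv0$, then $F^{(k)}-a\equiv c\,(F-a)$ for some non-zero constant $c$; comparing pole orders of the two sides forces $F$ to be entire, and then, since $n>k$ makes every zero of $F$ have multiplicity exceeding $k$, examining $N(r,0;F^{(k)}/F)$ shows $T(r,1/F)=S(r,F)$ unless $c=1$. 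Hence $c=1$ and $F^{(k)}\equiv F$.

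The substantial case is $\Phi\not\equiv0$. Here I would pass to $\varphi_{1}=F/a$ and $\varphi_{2}=F^{(k)}/a$, which share $1$ IM, and form the Steinmetz-type function
\[
H=\Bigl(\frac{\varphi_{1}''}{\varphi_{1}'}-\frac{2\varphi_{1}'}{\varphi_{1}-1}\Bigr)-\Bigl(\frac{\varphi_{2}''}{\varphi_{2}'}-\frac{2\varphi_{2}'}{\varphi_{2}-1}\Bigr).
\]
A Laurent expansion shows $H$ vanishes at every common simple $1$-point of $\varphi_{1},\varphi_{2}$, that is, at every simple common $a$-point of $F$ and $F^{(k)}$, so $N^{1)}_{E}(r,a;F)\le N(r,0;H)\le N(r,\infty;H)+S(r,f)$; the poles of $H$ lie only at poles of $F$, at multiple or differing-multiplicity $a$-points, and at zeros of $\varphi_{1}',\varphi_{2}'$ away from these. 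I would then apply the second fundamental theorem to $\varphi_{1}$ and to $\varphi_{2}$ for $0,1,\infty$, feed in the sharing identity $\ol N(r,a;F)=\ol N(r,a;F^{(k)})$ and the Milloux-type bound $\ol N(r,0;F^{(k)})\le N_{k+1}(r,0;F)+k\,\ol N(r,\infty;F)+S(r,f)$, let the ramification terms $N_{0}$ absorb the $\varphi_{i}'$-zeros, and control the differing-multiplicity term $\ol N_{*}(r,a;F,F^{(k)})$ through $\Phi$ and the multiple-$a$-point term through $\ol N(r,0;\,naf'-a'f)$. Since $T(r,\varphi_{1})=T(r,F)+S(r,f)$ and every counting function on the right is, by the sparseness bounds, a fixed fraction of $T(r,F)=nT(r,f)$, optimizing the bookkeeping leads to an inequality of the form
\[
\{n^{2}-(4k+6)n+2k^{2}+3k\}\,T(r,f)\le S(r,f),
\]
whose leading coefficient is positive precisely when $n>2k+3+\sqrt{(2k+3)(k+3)}$; this contradiction forces $\Phi\equiv0$, hence $F^{(k)}\equiv F$. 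The main obstacle is exactly this estimate: keeping the coefficients in front of $\ol N(r,0;F)$, $\ol N(r,\infty;F)$ and $\ol N_{*}(r,a;F,F^{(k)})$ tight enough—in particular handling the zeros of $F^{(k)}-a$ not coming from zeros of $f$—so that the quadratic threshold emerges exactly as stated rather than in a weaker form.

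Finally, from $F^{(k)}\equiv F$: a pole of $F$ of order $s$ would make $F^{(k)}$ have a pole of order $s+k>s$, so $F$ is entire; a zero of $F$ of order $p\ge n>k$ would make $F^{(k)}$ have a zero of order $p-k\ne p$, so $F$ is zero-free. Thus $f$ is entire and zero-free, $f=e^{\gamma}$ with $\gamma$ entire, and $F=e^{n\gamma}$. Writing $\beta=n\gamma$ we have $F^{(k)}=B_{k}e^{\beta}$, where $B_{k}$ is a differential polynomial in $\beta',\dots,\beta^{(k)}$ with leading term $(\beta')^{k}$, so $F^{(k)}=F$ gives $B_{k}\equiv1$; comparing characteristic functions, every term of $B_{k}$ other than $(\beta')^{k}$ has characteristic $O(T(r,\beta'))$ with a strictly smaller power of $\beta'$, which is incompatible with $B_{k}\equiv1$ unless $\beta'$ is constant. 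Then $\beta'=\lambda$ with $\lambda^{k}=1$, so $\beta=\lambda z+\mathrm{const}$, $\gamma=\tfrac{\lambda}{n}z+\mathrm{const}$, and $f=c\,e^{\lambda z/n}$ with $c$ a non-zero constant and $\lambda^{k}=1$, as claimed.
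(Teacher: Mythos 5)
This statement (Theorem E) is not proved in the paper at all: it is quoted as background from Zhang--Yang \cite{9}, so there is no in-paper argument to compare yours against; the relevant benchmark is the original proof in \cite{9}, and your plan does follow the same standard route (logarithmic-derivative auxiliary function $\Phi$, the Yi--Lin function $H$, the second fundamental theorem, Milloux's estimate $\ol N(r,0;F^{(k)})\le N_{k+1}(r,0;F)+k\ol N(r,\infty;F)+S(r,f)$, and the $\tfrac1n$ saving from the fact that every zero and pole of $F=f^{n}$ has multiplicity at least $n$). Your peripheral steps are sound: the $\Phi\equiv 0$ case does force $c=1$ (strictly, it forces $N(r,\infty;F)=S(r,f)$ rather than literal entirety, since poles of $F$ could sit at poles of $a$, but that is enough), the quadratic $n^{2}-(4k+6)n+2k^{2}+3k$ really does have $2k+3+\sqrt{(2k+3)(k+3)}$ as its larger root, and the endgame deducing $f=ce^{\lambda z/n}$ with $\lambda^{k}=1$ from $F^{(k)}\equiv F$ is correct.

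The genuine gap is that the one step carrying all the content of the theorem --- the derivation of
$\{n^{2}-(4k+6)n+2k^{2}+3k\}\,T(r,f)\le S(r,f)$ --- is asserted, not performed. You list the right ingredients and then say that ``optimizing the bookkeeping'' produces this inequality; but the sharp threshold is exactly a statement about the coefficients that survive that bookkeeping, and different (entirely plausible) ways of distributing $\ol N_{*}(r,a;F,F^{(k)})$, the multiple $a$-points, and the zeros of $F^{(k)}-a$ not arising from zeros of $f$ yield different quadratics, typically weaker ones. Writing down the target polynomial because you know its roots must match the hypothesis is reverse-engineering, not a proof. To close the gap you would need to display the two applications of the second fundamental theorem explicitly, show term by term which counting functions are bounded by $\tfrac1n N$, $\tfrac{k+1}{n}N$, or $N(r,\infty;\Phi)$, and verify that after adding the two inequalities the coefficient of $T(r,f)$ is precisely $n^{2}-(4k+6)n+2k^{2}+3k$ (after clearing a denominator of $n$). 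As it stands, the proposal is a correct and well-informed outline of the Zhang--Yang argument rather than a proof.
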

Since then, a number of improvements and generalizations have been made by many mathematicians on the uniqueness of $f^{m}$ and $(f^{m})^{(k)}$. But none of the researchers were being engaged towards changing of the sharing environment in those result. Though some recent papers (\cite{2a})-(\cite{2b}), focused on the derivative and set sharing of meromorphic functions but the expositions of those papers were different. In this paper, we take the problem in our concern, which certainly leads towards the following question :
\begin{question} If $f^{m}$ and $(f^{m})^{(k)}$ share a set $S$ instead of a value $a(\not=0,\infty)$, then can the conclusion of {\it Theorem C} be obtained? And if it is true then what is the minimum cardinality of the set $S$?
\end{question}
The following example shows that \emph{the minimum cardinality of such sets is at least three}.
\begin{exm}\label{111} Let $S=\{a,b\}$, where $a$ and $b$ are any two distinct complex numbers and $m\geq 1$ be any integer. Let $f(z)=(e^{-z}+a+b)^{\frac{1}{m}}$, where we take the principal branch when $m\geq 2$. Then $E_{f^m}(S)=E_{(f^{m})'}(S)$ but $f^{m}\not\equiv (f^{m})'$.
\end{exm}
The aim of this paper is to find the possible answer of the above question.
\section{main Results}
For a positive integer $n$, let  $P(z)$ denotes the following well known polynomial as introduced by Yi (\cite{13a}).
\be\label{e5.1}
P(z)=z^{n}+az^{n-1}+b~~~\text{where}~~ab\not=0~\text{and}~\frac{b}{a^{n}}\not=(-1)^{n}\frac{(n-1)^{(n-1)}}{n^{n}}.
\ee
Clearly, $P(z)$ has no repeated root.
\begin{theo}\label{thB1} Let $n(\geq 4)$, $k(\geq1)$ and $m(\geq k+1)$ be three positive integers. Suppose that $S=\{z :P(z)=0 \}$ where $P(z)$ is defined by (\ref{e5.1}). Let $f$ be a non-constant entire function such that $E_{f^{m}}(S,l)=E_{(f^{m})^{(k)}}(S,l)$. If
\begin{enumerate}
\item $1\leq l\leq \infty$ and $n\geq 4$, or
\item $l=0$ and  $ n\geq 5$,
\end{enumerate}
then  $f^{m} \equiv (f^{m})^{(k)}$ and $f$ takes the form $f(z)=ce^{\frac{\zeta}{m}z},$ where $c$ is a non-zero constant and $\zeta^{k}=1$.
\end{theo}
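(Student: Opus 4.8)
The plan is to translate the set-sharing hypothesis into a zero-sharing statement for the composed functions $\mathcal F:=P(f^{m})$ and $\mathcal G:=P\bigl((f^{m})^{(k)}\bigr)$, then run the familiar auxiliary-function dichotomy, and finally close the argument using that $f$ is entire together with the rigid shape of $P$. Write $F=f^{m}$ and $G=F^{(k)}$, both entire since $f$ is; every zero of $F$ has multiplicity at least $m$, so every zero of $F$ of order $t$ is a zero of $G$ of order $tm-k\ge 1$. Since $P$ has only simple roots and $S$ is its zero-set, a point at which $F$ (resp. $G$) assumes a value of $S$ with multiplicity $\mu$ is precisely a zero of $\mathcal F$ (resp. $\mathcal G$) of multiplicity $\mu$; hence $E_{f^{m}}(S,l)=E_{(f^{m})^{(k)}}(S,l)$ is equivalent to the statement that $\mathcal F$ and $\mathcal G$ share $(0,l)$. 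I would record at the outset the cheap inputs $\overline N(r,\infty;F)=\overline N(r,\infty;G)=0$, $T(r,\mathcal F)=nT(r,F)+S(r,F)$, $T(r,\mathcal G)=nT(r,G)+S(r,G)$, $T(r,G)\le T(r,F)+S(r,F)$, $\overline N(r,0;\mathcal F)=\overline N(r,0;\mathcal G)$, and — from the Second Main Theorem applied to $F$ against the $n$ distinct points of $S$, and likewise to $G$ — the bound $(n-1)T(r,F)\le \overline N(r,0;\mathcal F)+S(r,F)$.

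Next I would introduce the standard auxiliary function
\[
H=\Bigl(\frac{\mathcal F''}{\mathcal F'}-\frac{2\mathcal F'}{\mathcal F}\Bigr)-\Bigl(\frac{\mathcal G''}{\mathcal G'}-\frac{2\mathcal G'}{\mathcal G}\Bigr),
\]
and split into the cases $H\not\equiv0$ and $H\equiv0$; the case $H\not\equiv0$ is the technical heart and I expect it to be the main obstacle. Here $m(r,H)=S(r)$, $H$ vanishes at every common simple zero of $\mathcal F$ and $\mathcal G$, and the poles of $H$ are confined to multiple zeros of $\mathcal F$ or $\mathcal G$, to multiplicity-mismatch points (the $\overline N_{*}$ terms), and to zeros of $\mathcal F'$ or $\mathcal G'$ lying off the respective zero-sets. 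Feeding $N^{1)}_{E}(r,0;\mathcal F)\le N(r,0;H)\le N(r,\infty;H)+S(r)$ back into the Second Main Theorem estimates above, and exploiting the factorisation $\mathcal F'=F^{\,n-2}\bigl(nF+a(n-1)\bigr)F'$ together with the two defining properties of $P$ — namely $P(0)=b\ne0$ and $P\bigl(-a(n-1)/n\bigr)\ne0$, which are exactly what forces the spurious ramification contributions to be absorbed — one is led to an inequality $(n-1)T(r,F)\le\Lambda\,T(r,F)+S(r,F)$ with $\Lambda<n-1$ once $n\ge4$ (if $l\ge1$) or $n\ge5$ (if $l=0$); the extra unit needed when $l=0$ reflects that then a simple zero of $\mathcal F$ need not be a simple zero of $\mathcal G$. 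The delicate point is the bookkeeping of all the reduced counting functions so that $\Lambda$ comes out genuinely strictly below $n-1$; everything else in this case is routine.

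In the case $H\equiv0$, integrating twice gives $1/\mathcal F=c/\mathcal G-d$ for constants $c\ne0$ and $d$. If $d\ne0$ then $\mathcal F$, being entire, forces $\mathcal G$ to omit the value $c/d$; but $P(w)-c/d$ cannot be an $n$-th power, since matching the coefficient of $w^{n-2}$ would give $\binom n2(-a/n)^{2}=0$, impossible as $a\ne0$ and $n\ge3$ — so $P(w)-c/d$ has at least two distinct roots, whence $G$ is constant by Picard and $F=f^{m}$ is a polynomial of degree $\le k<m$, absurd. Therefore $d=0$, so $\mathcal F\equiv\alpha\mathcal G$ with $\alpha:=1/c$, i.e. $P(F)\equiv\alpha P(G)$. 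Suppose $f$ had a zero $z_{0}$; evaluating at $z_{0}$ (where $F=G=0$) gives $b=\alpha b$, so $\alpha=1$, hence $F^{\,n-1}(F+a)\equiv G^{\,n-1}(G+a)$; putting $h=G/F$, if $h\not\equiv1$ then $F=-a(1-h^{\,n-1})/(1-h^{\,n})$, and since $F$ is pole-free $h$ must avoid the $n-1\ge3$ non-trivial $n$-th roots of unity, so $h$ is constant by Picard and $F$ is constant, a contradiction; thus $h\equiv1$, i.e. $F\equiv F^{(k)}$, but then at $z_{0}$ the orders of $F$ and $F^{(k)}$ are $mt$ and $mt-k$ with $k\ge1$, again absurd. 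Hence $f$ is zero-free.

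To finish the case $H\equiv0$, write $f=e^{g}$ with $g$ entire and non-constant, so that $G=RF$ with $R$ the relevant complete Bell polynomial in $(mg)',(mg)'',\dots$; applying Borel's theorem (linear independence of $1,F,F^{2},\dots,F^{n}$ over small functions) to $P(F)\equiv\alpha P(G)=\alpha\bigl(R^{n}F^{n}+aR^{\,n-1}F^{\,n-1}+b\bigr)$ yields $\alpha R^{n}=1$, $\alpha R^{\,n-1}=1$ and $\alpha b=b$, hence $\alpha=1$ and $R\equiv1$; comparing growth rates, $R\equiv1$ forces $g$ to be affine, $g(z)=\lambda z+\mu$, and then $R\equiv1$ reads $(m\lambda)^{k}=1$. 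Setting $\zeta=m\lambda$ gives $\zeta^{k}=1$ and $f(z)=ce^{\zeta z/m}$ with $c=e^{\mu}$, and since $R\equiv1$ we also get $F\equiv F^{(k)}$, i.e. $f^{m}\equiv(f^{m})^{(k)}$; conversely one checks at once that this $f$ satisfies $f^{m}\equiv(f^{m})^{(k)}$. This exhausts all cases and proves the theorem.
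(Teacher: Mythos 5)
Your reduction of the hypothesis to $\mathcal F=P(f^{m})$ and $\mathcal G=P\bigl((f^{m})^{(k)}\bigr)$ sharing $(0,l)$ is exactly the paper's normalisation (the paper writes it as $R(f^{m})$ and $R\bigl((f^{m})^{(k)}\bigr)$ sharing $(1,l)$), and your handling of $H\equiv 0$ is essentially complete and is a legitimate alternative to the paper's route: integrating $H$ directly, killing $d\neq 0$ via Picard because $P(w)-c/d$ is never a perfect $n$-th power, and finishing with Borel to force $\alpha=1$ and $R\equiv 1$ is cleaner than the paper's M\"obius-transformation case analysis. The genuine gap is in the case $H\not\equiv 0$, which you correctly identify as the heart of the proof and then do not carry out. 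The claim that the Second Main Theorem bookkeeping produces $(n-1)T(r,F)\le\Lambda T(r,F)+S(r,F)$ with $\Lambda<n-1$ already for $n\ge 4$ (resp.\ $n\ge 5$ when $l=0$) \emph{is} the quantitative content of the theorem, and it does not follow from the ingredients you list. The obstruction is the counting function $\overline N\bigl(r,0;(f^{m})^{(k)}\bigr)$: since $\mathcal F'=F^{\,n-2}\bigl(nF+a(n-1)\bigr)F'$ and $P(0)=b\neq 0$, every zero of $F=f^{m}$ and every zero of $G=(f^{m})^{(k)}$ is a pole of $H$ (the residues of $\mathcal F''/\mathcal F'$ and $\mathcal G''/\mathcal G'$ there do not cancel in general), so $\overline N\bigl(r,0;(f^{m})^{(k)}\bigr)$ enters $N(r,\infty;H)$ and hence the main inequality with a fixed positive coefficient. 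The hypothesis $m\ge k+1$ ensures zeros of $f^{m}$ are zeros of $(f^{m})^{(k)}$, but nothing in your sketch controls the zeros of the derivative $(f^{m})^{(k)}$ itself, which for a general entire $f$ can be comparable to $T(r,f^{m})$; estimating this term trivially by $T(r,G)$ pushes $\Lambda$ above $n-1$ for the small values of $n$ in the statement.

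The paper closes exactly this hole with a second auxiliary function, in your normalisation $U=\mathcal F'/\mathcal F-\mathcal G'/\mathcal G$: a zero of $(f^{m})^{(k)}$ is a zero of $\mathcal F'/\mathcal F$ and of $\mathcal G'/\mathcal G$ of order at least $n-2$ each (coming from the factors $F^{\,n-1}$ and $G^{\,n-1}$ inside $\mathcal F-b$ and $\mathcal G-b$), while the poles of $U$ are confined to the sharing-discrepancy points of $\mathcal F,\mathcal G$ at $0$ and to the poles of $f$; combining $(n-2)\overline N\bigl(r,0;(f^{m})^{(k)}\bigr)\le N(r,\infty;U)+S(r)$ with the weighted-sharing estimates for $\overline N_{*}(r,0;\mathcal F,\mathcal G)$ gives $\overline N\bigl(r,0;(f^{m})^{(k)}\bigr)=S(r)$ for entire $f$ (when $U\not\equiv 0$; the case $U\equiv 0$ is handled by integration). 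Only after this input does the Second Main Theorem count close at $n\ge 4$ for $l\ge 1$ and at $n\ge 5$ for $l=0$, the extra unit in the latter case coming from the weaker bound on $\overline N_{*}$. You need to supply this lemma (or an equivalent device) and then actually perform the final counting; as written, the main case of the theorem is asserted rather than proved.
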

\begin{theo}\label{thB2}
Let $n(\geq 4)$, $k(\geq1)$ and $m(\geq k+1)$ be three positive integers. Suppose that $S=\{z :P(z)=0 \}$ where $P(z)$ is defined by (\ref{e5.1}). Let $f$ be a non-constant meromorphic function such that $E_{f^{m}}(S,l)=E_{(f^{m})^{(k)}}(S,l)$. If
\begin{enumerate}
\item $2\leq l\leq \infty$ and $(n-2)(2n^{2}l-5nl-3n+l+1)>6(n-1)l$, or
\item $l=1 $ and $n\geq 5,$ or
\item $l=0$ and  $ n\geq 7,$
\end{enumerate}
then  $f^{m} \equiv (f^{m})^{(k)}$ and $f$ takes the form $f(z)=ce^{\frac{\zeta}{m}z},$ where $c$ is a non-zero constant and $\zeta^{k}=1$.
\end{theo}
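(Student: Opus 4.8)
The plan is to recast the set-sharing hypothesis as an ordinary value-sharing statement for two auxiliary functions, force these functions to be identical by standard second-fundamental-theorem estimates, and then solve the resulting differential equation. Put $F=f^{m}$ and $G=(f^{m})^{(k)}=F^{(k)}$. Because $S$ is exactly the zero set of $P$ and $P$ has only simple zeros (which is precisely what the condition on $b/a^{n}$ in \eqref{e5.1} ensures), writing monic $P=\prod_{j=1}^{n}(z-a_{j})$ over its distinct roots gives $P(F)=\prod_{j}(F-a_{j})$ with the multiplicities of zeros preserved; hence $E_{f^{m}}(S,l)=E_{(f^{m})^{(k)}}(S,l)$ is equivalent to saying that $P(F)$ and $P(G)$ share $(0,l)$. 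I would first record the routine facts: one may assume $f$ is transcendental; $\overline N(r,\infty;G)=\overline N(r,\infty;F)$, $T(r,G)\le(k+1)T(r,F)+S(r,F)$, and $m(r,G/F)=S(r,F)$; each zero of $F=f^{m}$, being of multiplicity at least $m\ge k+1>k$, is a zero of $G$, while the $a_{j}$ are nonzero so that the zeros of $P(F)$ and those of $F$ are disjoint; and, crucially, $\overline N(r,0;F)\le N(r,0;f)=\tfrac1m\big(T(r,F)+O(1)\big)$, which is what makes the estimates close under the weak hypothesis $m\ge k+1$ rather than forcing $m$ large. The second fundamental theorem applied to $F$ with the $n$ values $a_{1},\dots,a_{n}$ and $\infty$ gives $(n-1)T(r,F)\le\overline N(r,0;P(F))+\overline N(r,\infty;F)+S(r,F)$, and likewise for $G$.

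The core of the proof is the dichotomy governed by
\[
\mathcal H=\Big(\frac{(P(F))''}{(P(F))'}-\frac{2(P(F))'}{P(F)}\Big)-\Big(\frac{(P(G))''}{(P(G))'}-\frac{2(P(G))'}{P(G)}\Big).
\]
Suppose first that $\mathcal H\not\equiv0$. Then $m(r,\mathcal H)=S(r,F)$, all poles of $\mathcal H$ are simple, and they occur only at poles of $F$, at zeros of $(P(F))'$ or $(P(G))'$ that are not zeros of $P(F),P(G)$ (i.e. at zeros of $F$, at the $-\tfrac{(n-1)a}{n}$-points of $F$ and $G$, and at zeros of $F'$ or $G'$), and at the common zeros of $P(F),P(G)$ whose multiplicities differ; on the other hand, every simple common zero of $P(F)$ and $P(G)$ is a zero of $\mathcal H$, so $N^{1)}_{E}(r,0;P(F))\le\overline N(r,0;\mathcal H)+S(r,F)\le\overline N(r,\infty;\mathcal H)+S(r,F)$. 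Inserting the resulting bound for $\overline N(r,0;\mathcal H)$ into the two second-fundamental-theorem inequalities (added together), replacing $\overline N(r,0;P(F))$ by $\overline N(r,0;P(G))$ at the cost of $\overline N_{*}(r,0;P(F),P(G))$, converting $\overline N_{*}$ and the auxiliary derivative-zeros into characteristic functions — this is where the weight $l$ enters, via bounds of the shape $\overline N_{*}(r,0;P(F),P(G))\le\tfrac1{l+1}(\cdots)$ — and using $\overline N(r,0;F)\le\tfrac1m T(r,F)+S(r,F)$ and $T(r,G)\le(k+1)T(r,F)+S(r,F)$, one arrives at an inequality of the shape $C(n,l)\,T(r,F)\le S(r,F)$ (possibly with a harmless extra $C'(n,l)T(r,G)$ on the left), where $C(n,l)>0$ exactly under the numerical hypotheses (1), (2), (3). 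Since $T(r,F)\to\infty$ this is a contradiction, whence $\mathcal H\equiv0$.

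Now suppose $\mathcal H\equiv0$. Integrating twice gives $\dfrac1{P(F)}=\dfrac{A}{P(G)}+B$ with constants $A(\ne0),B$. If $f$ had a pole of order $q$ then $P(F)$ would have a pole of order $mnq$ and $P(G)$ one of order $mnq+nk$, and the (necessarily simple) pole of $\mathcal H$ there would carry the nonzero coefficient $-nk$; hence $f$ is entire, so $P(F),P(G)$ are entire. Entireness of $P(G)=AP(F)/(1-BP(F))$ then forces $B=0$ (otherwise $P(F)$ omits $1/B$, forcing $F$ to omit the $n\ge2$ roots of $P(z)-1/B$, impossible), so $P(G)=AP(F)$; evaluating at a zero of $f$, where $F=0$ so $P(F)=b$ while $G$ vanishes so $P(G)=b$, gives $b=Ab$, so $A=1$ (if $f$ is zero-free one instead writes $f=e^{\alpha}$, notes that $G/F$ is a small function, and obtains $A=1$ by an elementary growth comparison). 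Thus $P(F)\equiv P(G)$, i.e. $F^{n-1}(F+a)\equiv G^{n-1}(G+a)$; putting $h=F/G$ this rearranges to $(h^{n}-1)G=a(1-h^{n-1})$, so $G=-a\,\dfrac{h^{n-1}-1}{h^{n}-1}$. Since $\gcd(h^{n-1}-1,h^{n}-1)=h-1$, a non-constant $h$ would omit the $n-1\ (\ge3)$ nontrivial $n$-th roots of unity as values (they would be poles of the entire function $G$), contradicting Picard's theorem; hence $h$ is constant, and then the relation forces $h^{n}=h^{n-1}=1$, whence $h=1$ and $F\equiv G$, i.e. $f^{m}\equiv(f^{m})^{(k)}$. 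Finally, at a zero of $f$ of order $p$ the left side of this identity vanishes to order $mp$ and the right to order $mp-k<mp$, which is impossible; so $f$ is zero-free, and being entire equals $e^{\alpha}$, whence $F=f^{m}$ is a zero-free entire solution of $y^{(k)}=y$, so $F=\sum_{j=0}^{k-1}c_{j}e^{\omega_{j}z}$ with $\omega_{j}^{k}=1$; zero-freeness (an exponential sum with at least two frequencies has zeros) forces all but one $c_{j}$ to vanish, and taking an $m$-th root yields $f(z)=c\,e^{\frac{\zeta}{m}z}$ with $\zeta^{k}=1$, as claimed. Theorem~\ref{thB1} is the special case with $F$ entire, so that $\overline N(r,\infty;F)\equiv0$; dropping the pole terms enlarges the range where $C(n,l)>0$ to $n\ge4$ for $l\ge1$ and $n\ge5$ for $l=0$, and the step deducing that $f$ is entire is not needed.

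The step I expect to be the main obstacle is the $\mathcal H\not\equiv0$ count. One must track simultaneously the reduced counting functions $\overline N_{L}$, $\overline N^{(2}_{E}$ and $N^{1)}_{E}$ for both $P(F)$ and $P(G)$ — each an $n$-fold sum over the roots of $P$ — keep control of the zeros of $(P(F))'$ and $(P(G))'$ not arising from $P(F),P(G)$, and then, using the sharing weight $l$ and the key inequality $\overline N(r,0;F)\le\tfrac1m T(r,F)+S(r,F)$, compress everything into the exact polynomial inequalities $(n-2)(2n^{2}l-5nl-3n+l+1)>6(n-1)l$ (for $2\le l\le\infty$), $n\ge5$ (for $l=1$), and $n\ge7$ (for $l=0$). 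Getting the constants sharp, rather than only for all sufficiently large $n$, is the delicate part; a secondary difficulty is making fully rigorous the elimination of the subcases $B\ne0$ and $A\ne1$ in the $\mathcal H\equiv0$ branch and the ruling-out of a non-constant $h=F/G$.
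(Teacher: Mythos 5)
Your overall architecture matches the paper's: the set-sharing hypothesis is converted into $(1,l)$-sharing of $R(f^m)$ and $R((f^m)^{(k)})$ (equivalently $(0,l)$-sharing of $P(f^m)$ and $P((f^m)^{(k)})$, which is the same auxiliary function $\mathcal H$ up to the constant $-b$), and the proof splits on $\mathcal H\equiv 0$ versus $\mathcal H\not\equiv 0$. Your treatment of the $\mathcal H\equiv 0$ branch is essentially sound and in places neater than the paper's: the residue computation showing that a pole of $f$ would give $\mathcal H$ a simple pole of residue $-nk\neq 0$ kills poles at once, whereas the paper (Lemmas \ref{sr1} and \ref{sr2}) works through the Möbius relation $F=\frac{AG+B}{CG+D}$ case by case and then applies the Second Fundamental Theorem to $h=(f^m)^{(k)}/f^m$; your Picard argument on $h$ and your elimination of $B\neq 0$, $A\neq 1$ are correct modulo routine details you flag yourself.

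The genuine gap is in the $\mathcal H\not\equiv 0$ branch, which is exactly where the theorem's numerical thresholds live, and your sketch cannot produce them. The only quantitative inputs you propose there are $\overline N(r,0;f^m)\le\frac1m T(r,f^m)$, $T(r,(f^m)^{(k)})\le (k+1)T(r,f^m)+S(r)$, and weight-$l$ bounds on $\overline N_*$. Feeding these into the added Second Fundamental Theorem inequalities gives pole and zero terms each of size roughly $\frac1m T(r,f^m)\le\frac12 T(r,f^m)$, which (for $l=0$, say) forces something like $n>14$ rather than $n\ge 7$, and in general yields conditions depending on $m$ — whereas the stated conditions depend only on $n$ and $l$. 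The paper's mechanism for getting the sharp, $m$-free constants is a pair of additional auxiliary functions you do not have: $U=\frac{F'}{F-1}-\frac{G'}{G-1}$ and $V=\frac{F'}{F(F-1)}-\frac{G'}{G(G-1)}$ (Lemmas \ref{bc121} and \ref{bc123}). Because $m\ge k+1$, every zero of $f$ is a zero of both $F$ and $G$ of order at least $n-1$, so $U$ vanishes there to order at least $n-2$; every pole of $f$ is a pole of both $F$ and $G$ of order at least $n$, so $V$ vanishes there to order at least $2n-1$. Since $N(r,\infty;U)$ and $N(r,\infty;V)$ are controlled by the $\overline N_L(r,1;\cdot)$ terms (hence by $\frac1l(\overline N(r,0;f)+\overline N(r,\infty;f))$), this bootstraps to $\overline N(r,0;f)=O(\frac1n)\overline N(r,\infty;f)+S(r)$ and $\overline N(r,\infty;f)=O(\frac1{n^2})T(r)+S(r)$, i.e.\ both counting functions are smaller than the trivial bound by a factor of order $n$. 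That gain of a factor $n$ is precisely what turns the main inequality into the conditions $n\ge 7$ ($l=0$), $n\ge 5$ ($l=1$) and $(n-2)(2n^{2}l-5nl-3n+l+1)>6(n-1)l$ ($l\ge 2$). You correctly identify this count as ``the delicate part,'' but identifying the difficulty is not the same as resolving it; without $U$ and $V$ (or an equivalent device exploiting the high-order common zeros and poles of $F$ and $G$), the proof of the theorem as stated does not go through.
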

\begin{cor}
Let $n(\geq 4)$, $k(\geq1)$ and $m(\geq k+1)$ be three positive integers. Suppose that $S=\{z :P(z)=0 \}$ where $P(z)$ is defined by (\ref{e5.1}). Let $f$ be a non-constant meromorphic function such that $E_{f^{m}}(S,3)=E_{(f^{m})^{(k)}}(S,3)$.\par Then  $f^{m} \equiv (f^{m})^{(k)}$ and $f$ takes the form $f(z)=ce^{\frac{\zeta}{m}z},$ where $c$ is a non-zero constant and $\zeta^{k}=1$.
\end{cor}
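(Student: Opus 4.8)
The plan is to obtain the corollary as an immediate specialization of Theorem~\ref{thB2}. The hypothesis $E_{f^{m}}(S,3)=E_{(f^{m})^{(k)}}(S,3)$ says exactly that $f^{m}$ and $(f^{m})^{(k)}$ share the set $S$ with weight $l=3$, which (since $3\geq 2$) puts us in case~(1) of Theorem~\ref{thB2}; among the admissible weights $l\in\{2,3\}$ the choice $l=3$ gives the best bound, because the quantity $(n-2)(2n^{2}l-5nl-3n+l+1)-6(n-1)l$ is increasing in $l$ for every $n\geq 4$. Thus it remains only to check that
\[
(n-2)\bigl(2n^{2}l-5nl-3n+l+1\bigr)>6(n-1)l
\]
holds for $l=3$ and every integer $n\geq 4$.

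With $l=3$ the inequality becomes $(n-2)(6n^{2}-18n+4)>18(n-1)$, i.e.
\[
6n^{3}-30n^{2}+22n+10>0 .
\]
At $n=4$ the left-hand side equals $2>0$; for $n\geq 5$ we have $6n^{3}-30n^{2}=6n^{2}(n-5)\geq 0$ and $22n+10>0$, so the inequality holds for all $n\geq 4$. Hence case~(1) of Theorem~\ref{thB2} applies with $l=3$, and its conclusion is precisely the assertion of the corollary: $f^{m}\equiv(f^{m})^{(k)}$ and $f(z)=ce^{\zeta z/m}$ with $c\neq 0$ and $\zeta^{k}=1$.

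There is essentially no obstacle, as all the work is in Theorem~\ref{thB2}; the only subtlety is that the cubic inequality is sharp at $n=4$ (the margin is just $2$), which is also the reason weight $3$ is imposed rather than a smaller weight: for $l=2$ the condition reads $(n-2)(4n^{2}-13n+3)>12(n-1)$, which already fails at $n=4$ ($30<36$), so the bare hypothesis $n\geq 4$ would not be enough with weight $2$.
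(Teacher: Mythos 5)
Your proposal is correct and matches the paper's (implicit) intent exactly: the corollary is a direct specialization of Theorem \ref{thB2}, case (1), with $l=3$, and your arithmetic verifying $(n-2)(6n^{2}-18n+4)>18(n-1)$ for all $n\geq 4$ (with margin $2$ at $n=4$) is right. The paper offers no separate proof, so there is nothing further to compare.
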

\begin{rem} However the following questions are still open for future studies:
\begin{enumerate}
\item [i)]Is it possible to omit the condition $m\geq k+1$ keeping the cardinality of the set $S$ same ?
\item [ii)] We see that from Example \ref{111} that the cardinality of $S$ is atleast three. So for the meromorphic functions in the  Theorem \ref{thB1}, is it possible to reduce the cardinality of $S$ to three?
\end{enumerate}
\end{rem}
\section {Lemmmas}
Throughout this paper, we use the following terminologies :\\
$$T(r):=T(r,f^m)+T(r,(f^m)^{(k)})~~~\text{and}~~~S(r):=S(r,f);$$
$$F:=R(f^{m})~~~\text{and}~~~G:=R((f^{m})^{(k)})~~~\text{where}~~~R(z):=-\frac{z^{n}+az^{n-1}}{b}.$$
Also we shall denote by $H$ (\cite{7.2}) the following function
\be H:=\left(\frac{F''}{F'}-\frac{2F'}{F-1}\right)-\left(\frac{G''}{G'}-\frac{2G'}{G-1}
\right).\ee
\begin{lem}\label{l.n.1}
If $F$ and $G$ share $(1,0)$, then
\begin{enumerate}
\item [i)] $\overline{N}_{L}(r,1;F)\leq \overline{N}(r,0;f)+\overline{N}(r,\infty;f)+S(r,f)$ and
\item [ii)] $\overline{N}_{L}(r,1;G)\leq \overline{N}(r,0; (f^{m})^{k})+\overline{N}(r,\infty;f)+S(r,f).$
\end{enumerate}
\end{lem}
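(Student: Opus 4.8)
My plan is to reduce everything to a single logarithmic derivative. Start from the factorization $P(z)=\prod_{j=1}^{n}(z-w_{j})$, where $w_{1},\dots,w_{n}$ are the roots of $P$; by the normalization in (\ref{e5.1}) these are pairwise distinct, and $P(0)=b\neq 0$ forces $w_{j}\neq 0$ for every $j$. Since
$$F-1=-\tfrac{1}{b}P(f^{m})=-\tfrac{1}{b}\prod_{j=1}^{n}\bigl(f^{m}-w_{j}\bigr),$$
any $1$-point $z_{0}$ of $F$ satisfies $f^{m}(z_{0})=w_{j}$ for exactly one index $j$, lies off the zeros and poles of $f^{m}$ there, and its multiplicity as a $1$-point of $F$ equals its multiplicity as a zero of $f^{m}-w_{j}$.

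Next I would invoke the definition of $\overline{N}_{L}$: if $z_{0}$ is counted by $\overline{N}_{L}(r,1;F)$ then its multiplicity $p$ as a $1$-point of $F$ satisfies $p>q\geq 1$, so $p\geq 2$; hence $f^{m}-w_{j}$ vanishes at $z_{0}$ to order $p\geq 2$, $(f^{m})'$ vanishes at $z_{0}$, and because $f^{m}(z_{0})=w_{j}\notin\{0,\infty\}$ the quotient $(f^{m})'/f^{m}$ has a zero at $z_{0}$. This gives
\be
\overline{N}_{L}(r,1;F)\le N\!\left(r,0;\tfrac{(f^{m})'}{f^{m}}\right)\le T\!\left(r,\tfrac{(f^{m})'}{f^{m}}\right)+O(1).
\ee
Then I would split the right-hand side by the First Main Theorem and estimate: the lemma on the logarithmic derivative gives $m\!\left(r,(f^{m})'/f^{m}\right)=S(r,f^{m})=S(r,f)$, while the poles of $(f^{m})'/f^{m}$ are all simple and occur exactly at the zeros and poles of $f^{m}$, i.e. at the zeros and poles of $f$, so $N\!\left(r,\infty;(f^{m})'/f^{m}\right)=\overline{N}(r,0;f)+\overline{N}(r,\infty;f)$. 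Combining these proves (i).

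For (ii) the argument is the same with $f^{m}$ replaced by $(f^{m})^{(k)}$: an $L$-point of $G$ is a zero of $((f^{m})^{(k)})'/(f^{m})^{(k)}=(f^{m})^{(k+1)}/(f^{m})^{(k)}$, so $\overline{N}_{L}(r,1;G)\le T\!\left(r,(f^{m})^{(k+1)}/(f^{m})^{(k)}\right)+O(1)$, and the same splitting yields the bound $S\bigl(r,(f^{m})^{(k)}\bigr)+\overline{N}\bigl(r,0;(f^{m})^{(k)}\bigr)+\overline{N}\bigl(r,\infty;(f^{m})^{(k)}\bigr)$. I would finish by noting that the poles of $(f^{m})^{(k)}$ lie precisely over the poles of $f$, so $\overline{N}\bigl(r,\infty;(f^{m})^{(k)}\bigr)=\overline{N}(r,\infty;f)$, and that $S\bigl(r,(f^{m})^{(k)}\bigr)=S(r,f)$ since $T\bigl(r,(f^{m})^{(k)}\bigr)$ and $T(r,f)$ have the same order of growth.

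The computation is essentially mechanical, so there is no real ``hard part''; the only places that need care — and where the hypotheses on $P$ actually enter — are the facts that $P$ has no multiple zero and that $0$ is not a zero of $P$, which together guarantee that every point counted by $\overline{N}_{L}(r,1;F)$ is a bona fide multiple zero of a single factor $f^{m}-w_{j}$ with $w_{j}\neq 0,\infty$ and is therefore detected by $(f^{m})'/f^{m}$. The identification $S\bigl(r,(f^{m})^{(k)}\bigr)=S(r,f)$ is standard and can simply be quoted from the basics of Nevanlinna theory.
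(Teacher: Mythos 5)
Your argument is correct and follows essentially the same route as the paper: both proofs observe that a point counted by $\overline{N}_{L}(r,1;F)$ is a multiple zero of some $f^{m}-w_{j}$ with $w_{j}\neq 0,\infty$, hence a zero of $(f^{m})'/f^{m}$, and then bound $N\left(r,0;(f^{m})'/f^{m}\right)$ via the First Main Theorem and the lemma on the logarithmic derivative by $N\left(r,\infty;(f^{m})'/f^{m}\right)+S(r,f)=\overline{N}(r,0;f)+\overline{N}(r,\infty;f)+S(r,f)$. The only cosmetic difference is that the paper inserts the intermediate quantity $N(r,1;F)-\overline{N}(r,1;F)$, which you bypass by going directly to the logarithmic derivative.
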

\begin{proof}
Clearly
\beas \overline{N}_{L}(r,1;F) &\leq&  N(r,1;F)-\ol{N}(r,1,F)\\
&\leq&  N\left(r,\infty;\frac{(f^{m})'}{f^{m}}\right)+S(r,f)\\
&\leq& (\overline{N}(r,0;f^{m})+\overline{N}(r,\infty;f^{m}))+S(r,f)\\
&\leq& (\overline{N}(r,0;f)+\overline{N}(r,\infty;f))+S(r,f).
\eeas
Similarly, we can get the inequality for $\overline{N}_{L}(r,1;G)$.
\end{proof}
\begin{lem}\label{l.n.1m}
If $F$ and $G$ share $(1,l)$ where $l\geq1$, then
$$\overline{N}_{L}(r,1;F)+\overline{N}_{L}(r,1;G) \leq \frac{1}{l}(\overline{N}(r,0;f)+\overline{N}(r,\infty;f))+S(r,f).$$
\end{lem}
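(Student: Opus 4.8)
The plan is to use the hypothesis that the weight $l$ is positive to force large multiplicities at the $L$-points of $F$ and $G$, and then to charge those multiplicities to the zeros of the logarithmic derivative $\psi:=(f^{m})'/f^{m}=m\,f'/f$, whose characteristic is only $\overline{N}(r,0;f)+\overline{N}(r,\infty;f)+S(r,f)$; the structural point is that both $\overline{N}_{L}(r,1;F)$ and $\overline{N}_{L}(r,1;G)$ can be estimated through the single function $F$.

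First comes a bookkeeping step for weighted sharing. If $z_{0}$ is a common $1$-point of $F$ and $G$ at which $F-1$ and $G-1$ vanish to orders $p$ and $q$ respectively, then $E_{l}(1;F)=E_{l}(1;G)$ forces $\min(p,l+1)=\min(q,l+1)$, so whenever $p\neq q$ we must have $p,q\geq l+1$. Consequently, at a point counted in $\overline{N}_{L}(r,1;F)$ one has $p>q\geq l+1$, hence $p\geq l+2$, while at a point counted in $\overline{N}_{L}(r,1;G)$ one has $q>p\geq l+1$; in particular these two sets of points are disjoint.

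Next I would pass from $F$ to $\psi$. By the definitions of $F$ and $R$, $F-1=-\tfrac1b\big((f^{m})^{n}+a(f^{m})^{n-1}+b\big)=-\tfrac1b P(f^{m})$, and since $P$ has only simple roots, none of them $0$ (as $P(0)=b\neq0$), a zero of $F-1$ of order $p$ is a point where $f^{m}$ takes one particular root $c_{j}$ of $P$ with multiplicity $p$; there $f$ is holomorphic and nonzero, $(f^{m})'$ vanishes to order exactly $p-1$, and hence so does $\psi$. Thus each point counted in $\overline{N}_{L}(r,1;F)$ contributes at least $p-1\geq l+1$ to $N(r,0;\psi)$, each point counted in $\overline{N}_{L}(r,1;G)$ contributes at least $p-1\geq l$, and since the underlying point sets are disjoint,
\[
(l+1)\,\overline{N}_{L}(r,1;F)+l\,\overline{N}_{L}(r,1;G)\ \leq\ N(r,0;\psi).
\]

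Finally, since $f$ is non-constant, $\psi\not\equiv0$, so $N(r,0;\psi)\leq T(r,\psi)+O(1)$ by the first fundamental theorem; writing $\psi=m\,f'/f$ and using that $f'/f$ has only simple poles, located precisely at the zeros and poles of $f$, together with $m(r,f'/f)=S(r,f)$, one gets $T(r,\psi)=\overline{N}(r,0;f)+\overline{N}(r,\infty;f)+S(r,f)$. Since $l\geq1$ gives $(l+1)\overline{N}_{L}(r,1;F)+l\,\overline{N}_{L}(r,1;G)\geq l\big(\overline{N}_{L}(r,1;F)+\overline{N}_{L}(r,1;G)\big)$, dividing the displayed inequality by $l$ yields the claim. (For $l=\infty$ the statement reduces to $\overline{N}_{L}(r,1;F)+\overline{N}_{L}(r,1;G)\leq S(r,f)$, which is immediate since CM sharing leaves no $L$-points.) The only step that needs genuine care is the combinatorial first step — checking that a positive weight really does push both multiplicities past $l$ at a point where they differ; everything afterwards is routine Nevanlinna estimation, the key being that the $G$-side $L$-count can be controlled via $F$ without ever invoking $(f^{m})^{(k)}$.
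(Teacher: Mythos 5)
Your proof is correct and follows essentially the same route as the paper's: use the weight $l\geq 1$ to force multiplicity at least $l+1$ (in fact $l+2$ on the $F$-side) at every $L$-point, charge the excess multiplicity of the $1$-points of $F$ to the zeros of $(f^{m})'/f^{m}$, and bound that by $\overline{N}(r,0;f)+\overline{N}(r,\infty;f)+S(r,f)$ via the first fundamental theorem and the lemma on the logarithmic derivative. The paper packages the first step as $\overline{N}_{L}(r,1;F)+\overline{N}_{L}(r,1;G)\leq\overline{N}(r,1;F\mid\geq l+1)\leq\frac{1}{l}\bigl(N(r,1;F)-\overline{N}(r,1;F)\bigr)$, but this is only a bookkeeping variant of your weighted count.
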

\begin{proof} Clearly
\beas\overline{N}_{L}(r,1;F)+\overline{N}_{L}(r,1;G) &\leq& \overline{N}(r,1;F|\geq l+1)\\
&\leq& \frac{1}{l} (N(r,1;F)-\ol{N}(r,1,F))\\
&\leq& \frac{1}{l} N\left(r,\infty;\frac{(f^m)'}{f^{m}}\right)+S(r,f)\\
&\leq& \frac{1}{l}(\overline{N}(r,0;f)+\overline{N}(r,\infty;f))+S(r,f).
\eeas
Hence the proof.
\end{proof}
\begin{lem}\label{bc121} Assume that $F$ and $G$ share $(1,l)$, $F \not\equiv G$ and $m\geq k+1$.
\begin{enumerate}
\item [i)] If $f$ is meromorphic function, $l=0$ and $n\geq 5$, then
\beas\label{bc111} \ol{N}(r,0;f)\leq \ol{N}(r,0,(f^m)^{(k)}) &\leq& \frac{3}{n-4}\overline{N}(r,\infty;f)+S(r).\eeas
\item [ii)] If $f$ is entire function, $l=0$ and $n\geq 5$, then
$$\ol{N}(r,0;f)= \ol{N}(r,0,(f^m)^{(k)})=S(r).$$
\item [iii)] If $f$ is meromorphic function, $l\geq 1$ and $n>2+\frac{1}{l}$, then
\beas\label{bc111} \ol{N}(r,0;f)\leq \ol{N}(r,0,(f^m)^{(k)}) &\leq& \frac{l+1}{nl-2l-1}\overline{N}(r,\infty;f)+S(r).\eeas
\item [iv)] If $f$ is entire function, $l\geq 1$ and $n> 2+\frac{1}{l}$, then
$$\ol{N}(r,0;f)= \ol{N}(r,0,(f^m)^{(k)})=S(r).$$
\end{enumerate}
\end{lem}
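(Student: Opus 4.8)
The plan is to exploit the identity $F = R(f^m)$ and $G = R((f^m)^{(k)})$ together with the hypothesis $F \not\equiv G$ to force a Second Main Theorem estimate on $f^m$ and on $(f^m)^{(k)}$ that bounds the zero-counting function $\ol N(r,0;f)$ by the pole-counting function $\ol N(r,\infty;f)$. First I would observe that $\ol N(r,0;f) = \ol N(r,0;f^m)$ up to $S(r)$, and that a zero of $f$ of multiplicity $p \ge 1$ is a zero of $(f^m)^{(k)}$ of multiplicity $mp - k \ge m - k \ge 1$ (using $m \ge k+1$); hence $\ol N(r,0;f) \le \ol N(r,0;(f^m)^{(k)})$ after dropping a bounded-degree correction, which explains the chain of inequalities. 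So the whole content is the \emph{second} inequality in each part, i.e. bounding $\ol N(r,0;(f^m)^{(k)})$ by a multiple of $\ol N(r,\infty;f)$.

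For that I would set $\Phi := (f^m)^{(k)}$ and note that both $R(f^m)$ and $R(\Phi)$ take the value $1$ precisely on the set $S$ (since $R(z)=1 \iff z^n + az^{n-1} + b = 0 \iff z \in S$), so the hypothesis $E_{f^m}(S,l) = E_{\Phi}(S,l)$ translates into "$F$ and $G$ share $(1,l)$", which is exactly the setup of Lemmas \ref{l.n.1} and \ref{l.n.1m}. The key step is to apply the Second Main Theorem to $G = R(\Phi)$ with the $n$ distinct roots of $P$ as target points together with $0$ and $\infty$: since $R$ has degree $n$, the $N$-ramification term and the truncated counting functions of the $a_j$-points of $\Phi$ reassemble into $\ol N(r,1;G)$ and $\ol N(r,0;\Phi)$, and one gets something like
\[
(n-1)\,\ol N(r,0;\Phi) \le \ol N(r,0;\Phi) + \ol N(r,\infty;\Phi) + \ol N_*(r,1;F,G) + S(r),
\]
because the shared-value-$1$ points of $F$ and $G$ cancel except for the "$L$-type" discrepancy $\ol N_*(r,1;F,G) = \ol N_L(r,1;F)+\ol N_L(r,1;G)$. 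Here $\ol N(r,\infty;\Phi) = \ol N(r,\infty;f)$ up to $S(r)$ since derivatives do not create new poles. Now I would feed in Lemma \ref{l.n.1} (when $l=0$) or Lemma \ref{l.n.1m} (when $l \ge 1$) to bound $\ol N_*(r,1;F,G)$ by $\ol N(r,0;f) + \ol N(r,\infty;f) + S(r)$ respectively $\frac1l(\ol N(r,0;f)+\ol N(r,\infty;f)) + S(r)$, use $\ol N(r,0;f) \le \ol N(r,0;\Phi)$, and solve the resulting linear inequality for $\ol N(r,0;\Phi)$. This yields coefficients $\frac{3}{n-4}$ in the $l=0$ case (three being $1+1+1$ from the two terms of $\ol N_L$ of Lemma \ref{l.n.1} plus the pole term absorbed on the right, with $n-1-1-1 = n-3$ and one more unit lost) and $\frac{l+1}{nl-2l-1}$ in the $l \ge 1$ case, matching the statement; parts (ii) and (iv) are then immediate because an entire $f$ has $\ol N(r,\infty;f) = 0$, so the right-hand side is $S(r)$.

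The main obstacle I anticipate is bookkeeping the truncated counting functions correctly in the Second Main Theorem step — specifically, verifying that when one expands $\ol N(r; a_j; \Phi)$ summed over the roots $a_j$ of $P$ and combines it with the ramification term of $R$, the "good" (genuinely shared, equal-multiplicity) $1$-points of $F$ and $G$ drop out and only $\ol N_L(r,1;F) + \ol N_L(r,1;G)$ survives, and that the multiplicity relation $mp-k$ at zeros of $f$ is handled so that no spurious contribution to $\ol N(r,0;\Phi)$ appears beyond what is charged to $\ol N(r,0;f)$. One must also be careful that the condition $m \ge k+1$ is genuinely used (to guarantee $(f^m)^{(k)}$ actually vanishes at zeros of $f$, so that $\ol N(r,0;f) \le \ol N(r,0;(f^m)^{(k)})$), and that the arithmetic threshold $n > 2 + \frac1l$ is exactly what makes the coefficient $nl-2l-1$ positive so the inequality can be solved. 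Once these are pinned down, the rest is the routine linear algebra already indicated.
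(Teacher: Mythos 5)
Your peripheral reductions are fine: the inequality $\overline{N}(r,0;f)\leq \overline{N}(r,0;(f^m)^{(k)})$ via $m\geq k+1$ is exactly how the paper uses that hypothesis, the translation of the set-sharing hypothesis into ``$F$ and $G$ share $(1,l)$'' is correct, and parts (ii) and (iv) do follow from (i) and (iii) because an entire $f$ has $\overline{N}(r,\infty;f)=0$. The problem is the central step. You propose to obtain
$$(n-1)\,\overline{N}(r,0;\Phi)\leq \overline{N}(r,0;\Phi)+\overline{N}(r,\infty;\Phi)+\overline{N}_{*}(r,1;F,G)+S(r)$$
by applying the Second Fundamental Theorem to $G=R(\Phi)$ with the roots of $P$ together with $0$ and $\infty$ as targets. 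That cannot work: the Second Fundamental Theorem gives a \emph{lower} bound for a sum of counting functions in terms of $T(r,\Phi)$; it never yields an upper bound for $\overline{N}(r,0;\Phi)$ by a small multiple of $\overline{N}(r,\infty;\Phi)$, and, applied to the single function $G$, it contains no mechanism by which the shared $1$-points of $F$ and $G$ could ``cancel'' leaving only the $L$-type discrepancy --- $F$ simply does not enter that estimate. The factor $n-1$ on the left has no source in your argument.

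The device that actually delivers this inequality is the auxiliary function $U=\frac{F'}{F-1}-\frac{G'}{G-1}$, which is what the paper uses. If $U\not\equiv 0$, then $N(r,0;U)\leq T(r,U)+O(1)=N(r,\infty;U)+m(r,U)+O(1)$; the lemma on the logarithmic derivative gives $m(r,U)=S(r)$, and the poles of $U$ occur only at poles of $f$ and at $1$-points of $F,G$ with unequal multiplicities, so $N(r,\infty;U)\leq \overline{N}_{L}(r,1;F)+\overline{N}_{L}(r,1;G)+\overline{N}(r,\infty;f)+S(r)$. On the other hand, every zero of $f$ (hence of $(f^m)^{(k)}$, by $m\geq k+1$) is a zero of both $F$ and $G$ of order at least $n-1$, hence a zero of $U$ of order at least $n-2$; this is the true source of the large factor on the left (the paper's $\nu=n-2$, in place of your $n-1$). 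Feeding in Lemma \ref{l.n.1} for $l=0$ or Lemma \ref{l.n.1m} for $l\geq 1$ and solving the linear inequality then produces exactly the coefficients $\frac{3}{n-4}$ and $\frac{l+1}{nl-2l-1}$. Finally, the degenerate case $U\equiv 0$ must be handled separately (integration gives $F-1=B(G-1)$; evaluating at a zero of $f$ forces $B=1$, i.e. $F\equiv G$, contradicting the hypothesis, so $\overline{N}(r,0;f)=S(r,f)$), a case your outline does not address. As written, the proposal names the right target inequality but supplies no valid derivation of it.
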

\begin{proof} For this lemma, we define $U:=\left(\frac{F'}{(F-1)}-\frac{G'}{(G-1)}\right)$.\\
\textbf{Case-1} $U\equiv 0$.\par
On integration, we get \bea\label{arab} F-1=B(G-1).\eea
If $z_{0}$ be a zero of $f$, then equation (\ref{arab}) yields that $B=1$, which is impossible. Thus  $\ol{N}(r,0;f)=S(r,f)$.\\
\textbf{Case-2} $U\not\equiv 0$.\par
If $z_{0}$ is a zero of $f$ of order $t$, then it is zero of $F$ of order $mt(n-1)$ and that of $G$ is of order $(mt-k)(n-1)$. Hence $z_{0}$ is a zero of $U$ of order atleast $\nu=(n-2).$
Thus
\beas &&\nu \ol{N}(r,0;f)\\
&\leq& \nu \ol{N}(r,0;(f^m)^{(k)})\\
&\leq& N(r,0;U) \leq N(r,\infty;U)+S(r,f)\\
&\leq& \ol{ N}_{L}(r,1;F)+\ol{N}_{L}(r,1;G)+\ol{N}_{L}(r,\infty;F)+\ol{N}_{L}(r,\infty;G)+ S(r,f)\\
&\leq& \ol{N}_{L}(r,1;F)+\ol{N}_{L}(r,1;G)+\ol{N}(r,\infty;f)+S(r,f).
\eeas
Next we consider two subcases:\\
\textbf{Subcase-2.1} Assume $l=0$.\\
Then in view of Lemma \ref{l.n.1}, we have,
\beas && \nu\ol{N}(r,0;f)\leq \nu \ol{N}(r,0,(f^m)^{(k)}) \\
&\leq& \overline{N}(r,0;f)+\overline{N}(r,0;(f^{m})^{(k)})+3\overline{N}(r,\infty;f)+S(r,f),\\
&\leq& 2\overline{N}(r,0;(f^{m})^{(k)})+3\overline{N}(r,\infty;f)+S(r).
\eeas
Thus
\beas \ol{N}(r,0;f)\leq \ol{N}(r,0,(f^m)^{(k)}) &\leq& \frac{3}{n-4}\overline{N}(r,\infty;f)+S(r).\eeas
\textbf{Subcase-2.2} Assume $l\geq 1$.\\
In this case, instead of Lemma \ref{l.n.1}, we use Lemma \ref{l.n.1m} and obtain
\beas &&\nu\ol{N}(r,0;f)\leq \nu\ol{N}(r,0,(f^m)^{(k)})\\
&\leq& \frac{1}{l}\left(\overline{N}(r,0;f)+\overline{N}(r,\infty;f)\right)+\overline{N}(r,\infty;f)+S(r,f),\eeas
i.e.,
\beas \ol{N}(r,0;f)\leq \ol{N}(r,0,(f^m)^{(k)}) &\leq& \frac{l+1}{nl-2l-1}\overline{N}(r,\infty;f)+S(r).
\eeas
\end{proof}
\begin{lem}\label{bc123} Assume that $F$ and $G$ share $(1,l)$, $F \not\equiv G$ and $m\geq k+1$.
\begin{enumerate}
\item [i)] If $l=0$ and $n\geq 6$, then
\beas\label{el.1}
\ol{N}(r,\infty;f)\leq \frac{n-4}{2n^{2}-11n+3}T(r)+S(r).\eeas
\item [ii)] If $l\geq 1$ and $n\geq 4$, then
\beas&&  \ol{N}(r,\infty;f)\leq \frac{(nl-2l-1)l}{(2nl-l-1)(nl-2l-1)-(l+1)^{2}}T(r)+S(r).\eeas
\end{enumerate}
\end{lem}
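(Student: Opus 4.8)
The plan is to bound $\ol N(r,\infty;f)$ by combining the Second Fundamental Theorem applied to $F$ and $G$ with the already–established control on $\ol N(r,0;f)$ coming from Lemma \ref{bc121}. Since $F$ and $G$ share $(1,l)$ and $F\not\equiv G$, I would first invoke the standard estimate coming from the auxiliary function $H$ (this is the point at which $H$, defined just before Lemma \ref{l.n.1}, enters): when $H\not\equiv 0$ one gets an inequality of the shape
\[
T(r,F)+T(r,G)\le \ol N(r,0;F)+\ol N(r,0;G)+\ol N(r,\infty;F)+\ol N(r,\infty;G)+\ol N(r,1;F\mid\ge 2)+\cdots+S(r),
\]
with the usual $\ol N_{*}$ and $\ol N_L$ correction terms, while the case $H\equiv 0$ is disposed of separately (it forces a Möbius relation between $F$ and $G$, and one checks this is incompatible with $F\not\equiv G$ under the pole/zero structure at hand, or is absorbed into the other branch of the argument in the main theorems). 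The key translations are $T(r,F)=n\,T(r,f^m)+S(r)$, $T(r,G)=n\,T(r,(f^m)^{(k)})+S(r)$, $\ol N(r,0;F)\le \ol N(r,0;f)+\ol N(r,-a;f^m)$ and similarly for $G$, and crucially $\ol N(r,\infty;F)=\ol N(r,\infty;f^m)=\ol N(r,\infty;f)+S(r)$, $\ol N(r,\infty;G)=\ol N(r,\infty;(f^m)^{(k)})\le \ol N(r,\infty;f)+S(r)$ (a pole of $f$ of order $t$ is a pole of $(f^m)^{(k)}$ of order $mt+k$, and when reduced contributes once, up to $S(r)$).

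Next I would estimate the remaining small correction terms. The $\ol N_L$–terms $\ol N_L(r,1;F)$, $\ol N_L(r,1;G)$ are controlled by Lemma \ref{l.n.1} (if $l=0$) or Lemma \ref{l.n.1m} (if $l\ge 1$), each contributing essentially $\ol N(r,0;f)+\ol N(r,\infty;f)$ up to a factor $1/l$ and $S(r)$; the $\ol N_*(r,\infty;F,G)$–type term is bounded by $\ol N(r,\infty;f)+S(r)$ as above; and terms like $\ol N^{(2}_E$ or $\ol N_0(r,0;F')$ are absorbed into the difference $T(r,F)-N(r,0;F')$ and contribute with the right sign. Collecting everything, the zero–counting functions $\ol N(r,0;f)$ and $\ol N(r,0;(f^m)^{(k)})$ appear on the right; here I substitute the bounds from Lemma \ref{bc121}(i) resp. (iii), namely $\ol N(r,0;f)\le\frac{3}{n-4}\ol N(r,\infty;f)+S(r)$ when $l=0$ and $\ol N(r,0;f)\le\frac{l+1}{nl-2l-1}\ol N(r,\infty;f)+S(r)$ when $l\ge1$. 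After this substitution the inequality reads $c_1\,T(r)\le c_2\,\ol N(r,\infty;f)+S(r)$ with explicit rational constants $c_1,c_2$ in $n$ (and $l$); since $\ol N(r,\infty;f)\le T(r,f^m)+S(r)\le T(r)+S(r)$ already, what one must actually do is rearrange to isolate $\ol N(r,\infty;f)$ on the left. Writing the SFT inequality so that the $\ol N(r,\infty;f)$ contributions on the \emph{right} are moved to the left, one obtains $\big(2n-(\text{const})\big)\ol N(r,\infty;f)\le (\text{const})\,T(r)+S(r)$ after clearing denominators, which is exactly the stated form: in case (i) the denominator $2n^2-11n+3$ and numerator $n-4$ emerge from clearing $n-4$ out of the $\frac{3}{n-4}$ factor and collecting the coefficient of $\ol N(r,\infty;f)$; in case (ii) the denominator $(2nl-l-1)(nl-2l-1)-(l+1)^2$ and numerator $(nl-2l-1)l$ arise analogously after clearing $nl-2l-1$ from the factor $\frac{l+1}{nl-2l-1}$.

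Finally one checks positivity of the leading coefficient so that the inequality is meaningful: $2n^2-11n+3>0$ for $n\ge 6$ (giving the hypothesis in (i)), and $(2nl-l-1)(nl-2l-1)-(l+1)^2>0$ for $n\ge4$, $l\ge1$ (giving (ii)); these are elementary but need to be verified, and they pin down exactly why the numerical hypotheses $n\ge6$ and $n\ge4$ appear. The main obstacle, as usual in this circle of arguments, is the bookkeeping in the $H\not\equiv0$ estimate: one must be careful to account for \emph{all} the reduced counting functions with the correct multiplicities (the $p=q$, $p>q$, $p<q$ splitting of Definition for $\ol N_L$, $N^{1)}_E$, $\ol N^{(2}_E$) so that no term is double counted and the coefficient of $\ol N(r,\infty;f)$ that survives on the right is precisely what is claimed; the $H\equiv0$ case and the verification that it cannot occur (or reduces to the desired conclusion) is the other point requiring care. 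Everything else is a substitution of Lemma \ref{bc121} followed by elementary algebra.
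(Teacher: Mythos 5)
There is a genuine gap: your argument runs in the wrong direction and cannot produce the stated inequality. An application of the Second Fundamental Theorem to $F$ (or $G$) yields an inequality of the form $n\,T(r,f^m)\leq (\text{counting functions}) + S(r)$, in which $\overline{N}(r,\infty;f)$ occurs on the \emph{right-hand side} with a small coefficient (at most $2$ or $3$). From $c_1 T(r)\leq c_2\,\overline{N}(r,\infty;f)+c_3 T(r)+S(r)$ one can only extract a \emph{lower} bound $\overline{N}(r,\infty;f)\geq \frac{c_1-c_3}{c_2}T(r)+S(r)$ (which is how the main theorems eventually reach a contradiction), never an upper bound of the form $\overline{N}(r,\infty;f)\leq \frac{\text{small}}{\text{large}}T(r)+S(r)$. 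Your proposed ``rearrangement to isolate $\overline{N}(r,\infty;f)$ on the left'' would put it there with a \emph{negative} coefficient, so the claimed form $\bigl(2n-\text{const}\bigr)\overline{N}(r,\infty;f)\leq(\text{const})T(r)+S(r)$ does not follow. The discussion of $H\equiv 0$ versus $H\not\equiv 0$ is also beside the point here: the lemma assumes only $F\not\equiv G$, and $H$ plays no role in its proof.

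The missing idea is a second auxiliary function, namely $V:=\frac{F'}{F(F-1)}-\frac{G'}{G(G-1)}$. The paper's proof splits on $V\equiv 0$ (which, via integration and the fact that $f^m$ and $(f^m)^{(k)}$ share $(\infty,0)$, forces $\overline{N}(r,\infty;f)=S(r,f)$) and $V\not\equiv 0$. In the latter case the crucial observation is that a pole of $f$ of order $p$ is a pole of $F$ and $G$ of orders $pmn$ and $(pm+k)n$, hence a zero of each logarithmic-derivative block of $V$ of order at least $n-1$ and of $V$ itself of order at least $\lambda=2n-1$. This is what places $\overline{N}(r,\infty;f)$ on the \emph{left} with the large coefficient $2n-1$: one gets $\lambda\,\overline{N}(r,\infty;f)\leq N(r,0;V)\leq N(r,\infty;V)+S(r,f)$, and the poles of $V$ are confined to the $1$-points with differing multiplicities and the zeros of $F$ and $G$, which are controlled by Lemmas \ref{l.n.1}/\ref{l.n.1m} and \ref{bc121} together with $\overline{N}(r,0;G)\leq \overline{N}(r,0;(f^m)^{(k)})+T(r)+S(r)$. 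Only after this does the elementary algebra you describe (clearing the denominators $n-4$ and $nl-2l-1$ from Lemma \ref{bc121}) produce the coefficients $\frac{n-4}{2n^{2}-11n+3}$ and $\frac{(nl-2l-1)l}{(2nl-l-1)(nl-2l-1)-(l+1)^{2}}$. Your correct instinct to reuse Lemma \ref{bc121} and to verify positivity of the denominators is the final step of the paper's argument, but without the function $V$ and its high-order vanishing at poles of $f$ there is no route to the inequality.
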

\begin{proof} For this lemma, we define $V:=\left(\frac{F'}{F(F-1)}-\frac{G'}{G(G-1)}\right).$\\
\textbf{Case-1} $V\equiv 0$.\\
On integration, we get
\bea\label{arab1} \left(1-\frac{1}{F}\right)=A\left(1-\frac{1}{G}\right).
\eea
As $f^{m}$ and $(f^{m})^{(k)}$ share $(\infty,0)$, so if $\overline{N}(r,\infty;f)\neq S(r,f)$, then equation (\ref{arab1}) yields that $A=1$, but this is not possible. Hence $\overline{N}(r,\infty;f)=S(r,f).$\\
\textbf{Case-2} $V\not\equiv 0$.\\
Let $z_{0}$ be a pole of $f$ of order $p$, then it is a pole of $(f^{m})^{(k)}$ of order $(pm+k)$. Thus $z_{0}$ is the pole of $F$ and $G$ of order $pmn$ and $(pm+k)n$ respectively. Hence $z_{0}$ is a zero of $\left(\frac{F'}{F-1}-\frac{F'}{F}\right)$ of order atleast $(pmn-1)$ and zero of $V$ of order atleast $\lambda$ where $\lambda=2n-1$.
Thus \beas &&\lambda \overline{N}(r,\infty;f)\\
&\leq& N(r,0;V) \leq N(r,\infty;V)+S(r,f)\\
&\leq& \overline{N}_{L}(r,1;F)+\overline{N}_{L}(r,1;G)+\overline{N}_{L}(r,0;F)\\
&+& \overline{N}_{L}(r,0;G)+\ol{N}(r,0;G| F\neq 0)+S(r,f)\\
&\leq& \overline{N}_{L}(r,1;F)+\overline{N}_{L}(r,1;G)+\overline{N}(r,0;G)+S(r,f)\\
&\leq& \overline{N}_{L}(r,1;F)+\overline{N}_{L}(r,1;G)+\overline{N}(r,0;(f^{m})^{(k)})+T(r)+S(r)
\eeas
Now we consider two subcases:\\
\textbf{Case-2.1} Assume $l= 0$.\\
Then using the Lemma \ref{l.n.1}, we can get
\beas && \lambda \overline{N}(r,\infty;f) \\
&\leq& \overline{N}(r,0;f)+\overline{N}(r,0;(f^{m})^{(k)})+2\overline{N}(r,\infty;f)\\
&+&\overline{N}(r,0;(f^{m})^{(k)})+T(r)+ S(r).
\eeas
Then applying Lemma \ref{bc121}, we get
\beas (2n-3)\ol{N}(r,\infty;f) &\leq& T(r)+3\overline{N}(r,0;(f^{m})^{(k)})+S(r)\\
 &\leq&  T(r)+\frac{9}{n-4}\overline{N}(r,\infty;f)+S(r).
\eeas
Thus
\beas \ol{N}(r,\infty;f)\leq \frac{n-4}{2n^{2}-11n+3}T(r)+S(r).\eeas
\textbf{Case-2.2} Assume $l\geq 1$.\\
Here instead of Lemma \ref{l.n.1}, we use the Lemma \ref{l.n.1m} and get
\beas &&\lambda\overline{N}(r,\infty;f) \\
&\leq& \frac{1}{l}(\overline{N}(r,0;f)+\overline{N}(r,\infty;f))+\overline{N}(r,0;(f^{m})^{(k)})+T(r)+ S(r).
\eeas
That is,
\beas (l\lambda-1)\ol{N}(r,\infty;f) &\leq& (l+1)\overline{N}(r,0;(f^{m})^{(k)}+l T(r)+S(r)\\
 &\leq& l T(r)+\frac{(l+1)^{2}}{nl-2l-1}\overline{N}(r,\infty;f)+S(r).
\eeas
Thus
\beas \ol{N}(r,\infty;f)\leq \frac{(nl-2l-1)l}{(2nl-l-1)(nl-2l-1)-(l+1)^{2}}T(r)+S(r),\eeas
where $(2nl-l-1)(nl-2l-1)-(l+1)^{2}=l\big(l\{n(2n-5)+1\}-(3n-1)\big)\geq2l>0$, as $n\geq4 $ and $l\geq1$.
\end{proof}
\begin{lem}\label{bc1234}If $F$ and $G$ share $(1,l)$, $H \not\equiv0$ and $m\geq k+1$, then
\bea\label{el.2} && N(r,\infty;H)\\
\nonumber &\leq& \overline{N}(r,\infty;f)+\overline{N}\left(r,0;(f^m)^{(k)}\right)+\overline{N}\left(r,-a\frac{n-1}{n};f^m\right)\\
\nonumber &+&\overline{N}\left(r,-a\frac{n-1}{n};(f^m)^{(k)}\right)+\overline{N}_{L}(r,1;F)+\overline{N}_{L}(r,1;G)\\
\nonumber &+& \overline{N}_{0}(r,0;(f^{m})')+\overline{N}_{0}\left(r,0;(f^m)^{(k+1)}\right),
\eea
where $\overline{N}_{0}(r,0;(f^{m})')$ denotes the counting function of the zeros of $(f^{m})'$ which are not the zeros of $f^{m}(f^{m}+a\frac{n-1}{n})$ and $F-1$. Similarly, $\overline{N}_{0}\left(r,0;(f^{m})^{(k+1)}\right)$ is defined.
\end{lem}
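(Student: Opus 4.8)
The plan is to exploit that $H$ is, up to a sign, the logarithmic derivative of a single meromorphic function. Since
\[
\frac{F''}{F'}-\frac{2F'}{F-1}=\left(\log\frac{F'}{(F-1)^{2}}\right)'
\]
and likewise for $G$, one has $H=(\log\Phi)'$ with $\Phi:=\dfrac{F'(G-1)^{2}}{G'(F-1)^{2}}$. Consequently every pole of $H$ is simple, and a pole of $H$ can occur only at a zero or a pole of $\Phi$, that is, only at a pole of $F$ or $G$, at a zero of $F'$ or $G'$, or at a $1$-point of $F$ or $G$. In particular $N(r,\infty;H)=\overline{N}(r,\infty;H)$, so it suffices to count the distinct points of each of these three kinds that actually survive as poles of $H$; here one uses throughout the special form $F=R(f^{m})$, $G=R\bigl((f^{m})^{(k)}\bigr)$ with $R(z)=-z^{n-1}(z+a)/b$. (The hypothesis $H\not\equiv0$ only ensures $\Phi$ is non-constant; otherwise the estimate is trivial.)

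First I would dispose of the poles of $F$, $G$ and of the $1$-points. The poles of $F$ are exactly the poles of $f^{m}$ and those of $G$ exactly the poles of $(f^{m})^{(k)}$, and both coincide with the pole set of $f$; since a pole of $f$ of order $p$ is a pole of $F$ of order $nmp$ and a pole of $G$ of order $n(mp+k)$, which differ because $k\geq1$, the order of $\Phi$ there is nonzero, so $H$ has a simple pole and these points contribute at most $\overline{N}(r,\infty;f)$. At a $1$-point of $F$ of $F$-multiplicity $p$ which is a $1$-point of $G$ of $G$-multiplicity $q$, a short computation gives that the order of $\Phi$ at that point equals $q-p$; hence $H$ is holomorphic at common $1$-points of equal multiplicity and has a simple pole only at those of unequal multiplicity. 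Since $F$ and $G$ share $(1,l)$, every $1$-point of $F$ is a $1$-point of $G$ and conversely, so the surviving $1$-points contribute at most $\overline{N}_{L}(r,1;F)+\overline{N}_{L}(r,1;G)$.

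It remains to analyse the zeros of $F'$ and of $G'$. By the chain rule $F'=R'(f^{m})(f^{m})'=-\tfrac1b(f^{m})^{n-2}\bigl(nf^{m}+a(n-1)\bigr)(f^{m})'$, so a zero of $F'$ lies only at a zero of $f^{m}$, or where $f^{m}=-a\tfrac{n-1}{n}$, or at a zero of $(f^{m})'$. A zero of $f^{m}$ is a zero of $f$ and hence, because $m\geq k+1$ so that $f^{m}$ vanishes there to order $>k$, also a zero of $(f^{m})^{(k)}$; thus such points are absorbed into $\overline{N}\bigl(r,0;(f^{m})^{(k)}\bigr)$. The points where $f^{m}=-a\tfrac{n-1}{n}$ contribute at most $\overline{N}\bigl(r,-a\tfrac{n-1}{n};f^{m}\bigr)$, and a zero of $(f^{m})'$ which is of none of the preceding types and not a $1$-point of $F$ contributes, by definition, at most $\overline{N}_{0}(r,0;(f^{m})')$. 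The identical argument for $G'=R'\bigl((f^{m})^{(k)}\bigr)(f^{m})^{(k+1)}$ yields the terms $\overline{N}\bigl(r,0;(f^{m})^{(k)}\bigr)$, $\overline{N}\bigl(r,-a\tfrac{n-1}{n};(f^{m})^{(k)}\bigr)$ and $\overline{N}_{0}\bigl(r,0;(f^{m})^{(k+1)}\bigr)$. Since the zeros of $f^{m}$ are zeros of $(f^{m})^{(k)}$, the two occurrences of $\overline{N}\bigl(r,0;(f^{m})^{(k)}\bigr)$ merge into one, and adding the contributions of all three kinds, each a reduced counting function because every pole of $H$ is simple, yields (\ref{el.2}).

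This argument is essentially careful bookkeeping rather than one hard step; the delicate part is tracking multiplicities so that nothing is miscounted. For instance one must check, using the sharing hypothesis, that a zero of $(f^{m})^{(k)}$, being a zero and not a $1$-point of $G$, cannot also be a $1$-point of $F$; that a point where $f^{m}=-a$ with multiplicity $\geq2$, at which $F$ vanishes and $F'$ also vanishes, is correctly captured by $\overline{N}_{0}(r,0;(f^{m})')$ because $f^{m}$ there is neither $0$ nor $-a\tfrac{n-1}{n}$; and that at a zero of $f$ the parts of $H$ coming from $F$ and from $G$ produce principal parts whose orders differ by $k(n-1)\neq0$, so they do not cancel. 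Once $H$ is recognised as a logarithmic derivative, all of these reduce to routine local computations.
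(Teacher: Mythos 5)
Your proof is correct and follows essentially the same route as the paper: the paper simply declares the estimate obvious (recording only the key observations that zeros of $F$, $G$ arise from $R$ and that every zero of $f^{m}$ is a zero of $(f^{m})^{(k)}$ when $m\geq k+1$), and what you have written out is precisely the standard local analysis of $H$ as the logarithmic derivative of $\Phi=F'(G-1)^{2}/\bigl(G'(F-1)^{2}\bigr)$ that underlies that assertion. Your bookkeeping of the contributions from poles, unequal-multiplicity $1$-points, and the factorization $F'=-\tfrac{1}{b}(f^{m})^{n-2}\bigl(nf^{m}+a(n-1)\bigr)(f^{m})'$ matches the intended argument and reproduces the stated bound exactly.
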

\begin{proof} The proof is obvious if we  keep the following things in our mind:\\
Zeros of $F$ comes from zeros of $f^{m}$ and zeros of $f^{m}+a$. Also zeros of $G$ comes from zeros of $(f^{m})^{(k)}$ and  $(f^{m})^{(k)}+a$.\par
 Again, any zero of $f^{m}$ is a zero of $(f^{m})^{(k)}$ as $m\geq k+1$. Thus
$$\ol{N}_{L}(r,0;f^{m})+\ol{N}_{L}(r,0;(f^{m})^{(k)})+\ol{N}_{L}(r,0;(f^{m})^{(k)}\mid f^{m}\not= 0)\leq \ol{N}(r,0;(f^{m})^{(k)}).$$
\end{proof}
\begin{lem}\label{ma}(\cite{1a})
If $F$ and $G$ share $(1,l)$ where $0\leq l<\infty$, then
\beas&&\ol{N}(r,1;F)+\ol{N}(r,1;G)-N_{E}^{1}(r,1,F)+\left(l-\frac{1}{2}\right)\ol{N}_{*}(r,1;F,G)\\
&\leq&\frac{1}{2}(N(r,1;F)+N(r,1;G)).\eeas
\end{lem}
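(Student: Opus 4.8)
The plan is to prove the inequality by a direct accounting of the $1$-points of $F$ and $G$, exploiting the multiplicity restrictions forced by the weighted sharing condition. First I would record what $E_{F}(\{1\},l)=E_{G}(\{1\},l)$ means at a single point: if $z_{0}$ is a $1$-point of $F$ of multiplicity $a$ and of $G$ of multiplicity $b$ (with the convention $a=0$, resp. $b=0$, when $z_{0}$ is not a $1$-point of the relevant function), then $\min(a,l+1)=\min(b,l+1)$. Two consequences follow at once: $F$ and $G$ share the value $1$ ignoring multiplicities, so their $1$-point sets coincide; and whenever $a\neq b$ at a common $1$-point one must have $a,b\geq l+1$, hence $a+b\geq (l+1)+(l+2)=2l+3$. (The bound $a+b\ge 2l+3$ also holds trivially when $l=0$, since then $a$ and $b$ are merely distinct positive integers.)

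Next I would split the common set of $1$-points into four classes according to the pair $(a,b)$: the class $a=b=1$, counted by $N^{1)}_{E}(r,1;F)$; the class $a=b\geq 2$, counted without multiplicity by $\ol N^{(2}_{E}(r,1;F)$; the class $a>b\geq 1$, counted without multiplicity by $\ol N_{L}(r,1;F)$; and the class $b>a\geq 1$, counted by $\ol N_{L}(r,1;G)$. Adding these reduced counts gives
\[
\ol N(r,1;F)=\ol N(r,1;G)=N^{1)}_{E}(r,1;F)+\ol N^{(2}_{E}(r,1;F)+\ol N_{*}(r,1;F,G),
\]
since $\ol N_{*}(r,1;F,G)=\ol N_{L}(r,1;F)+\ol N_{L}(r,1;G)$. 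Substituting this into the left side of the asserted inequality, the term $N^{1)}_{E}$ partially cancels and the coefficient of $\ol N_{*}$ collects to $l+\tfrac32$, so the left side equals $N^{1)}_{E}(r,1;F)+2\,\ol N^{(2}_{E}(r,1;F)+(l+\tfrac32)\,\ol N_{*}(r,1;F,G)$.

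Finally I would estimate $\tfrac12\bigl(N(r,1;F)+N(r,1;G)\bigr)$ from below by running the same four-class partition but keeping multiplicities: a point with $a=b=1$ contributes $1$ to the average; a point with $a=b\geq 2$ contributes $a\geq 2$; and a point of either $L$-type, with multiplicities $a$ and $b$, contributes $\tfrac12(a+b)\geq l+\tfrac32$ by the inequality $a+b\geq 2l+3$ from the first step. Summing the four contributions yields
\[
\tfrac12\bigl(N(r,1;F)+N(r,1;G)\bigr)\geq N^{1)}_{E}(r,1;F)+2\,\ol N^{(2}_{E}(r,1;F)+\Bigl(l+\tfrac32\Bigr)\ol N_{*}(r,1;F,G),
\]
which is exactly the quantity to which the left side was reduced; the lemma follows. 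I do not anticipate any real obstacle here — the entire content is the elementary bound $a+b\geq 2l+3$ for the unequal-multiplicity points — and the only spots needing a little care are using $a\geq 2$ (rather than $a\ge 1$) on the class $a=b\geq 2$ so as to keep the factor $2$ in front of $\ol N^{(2}_{E}$, and verifying that the estimate for the $L$-type points remains valid in the borderline case $l=0$.
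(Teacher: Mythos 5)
Your argument is correct. The paper itself offers no proof of this lemma---it is quoted from reference \cite{1a}---so there is nothing internal to compare against, but your pointwise counting is exactly the standard way this estimate is established. The three ingredients all check out: (i) weighted sharing $(1,l)$ means $\min(a,l+1)=\min(b,l+1)$ at each point, which forces the $1$-points to coincide and forces $a,b\geq l+1$ (hence $a+b\geq 2l+3$) whenever $a\neq b$, including the borderline case $l=0$; (ii) the four-class decomposition correctly reduces the left-hand side to $N^{1)}_{E}(r,1;F)+2\,\ol N^{(2}_{E}(r,1;F)+\left(l+\frac{3}{2}\right)\ol N_{*}(r,1;F,G)$, using $\ol N_{*}=\ol N_{L}(r,1;F)+\ol N_{L}(r,1;G)$ as in Definition 1.5; (iii) the same decomposition with multiplicities bounds $\frac{1}{2}\left(N(r,1;F)+N(r,1;G)\right)$ below by the identical quantity, the only points needing care being the factor $a\geq 2$ on the class $a=b\geq 2$ and the bound $\frac{1}{2}(a+b)\geq l+\frac{3}{2}$ on the $L$-type points, both of which you handle. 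No gap.
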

\begin{lem}\label{abc121} If  $f^m=(f^{m})^{(k)}$ and $m\geq k+1$, then  $f$ takes the form  $$f(z)=ce^{\frac{\zeta}{m}z},$$
where $c$ is a non-zero constant and $\zeta^{k}=1$.
\end{lem}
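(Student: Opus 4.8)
The plan is to determine the precise form of $F:=f^{m}$ directly from the functional equation $F^{(k)}=F$, via a multiplicity analysis, the solution of a constant–coefficient linear ODE, and an $m$-th–root extraction. First I would observe that $f$ has no poles: a pole of $f$ of order $p\ge 1$ would be a pole of $F=f^{m}$ of order $mp$ but a pole of $F^{(k)}$ of order $mp+k>mp$, contradicting $F=F^{(k)}$; hence $F$ is entire. Similarly $f$ has no zeros: a zero of $f$ of order $t\ge 1$ would be a zero of $F$ of order $mt$ but, since $mt\ge m\ge k+1>k$, a zero of $F^{(k)}$ of order exactly $mt-k<mt$, again impossible. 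Thus $F$ is a zero-free entire function (and $F\not\equiv\text{const}$ since $f$ is non-constant).

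Next, since $F$ is an entire solution of $y^{(k)}-y=0$, whose characteristic polynomial $\lambda^{k}-1$ has the $k$ distinct roots $\lambda_{1},\dots,\lambda_{k}$ (the $k$-th roots of unity), I would write $F=\sum_{j=1}^{k}c_{j}e^{\lambda_{j}z}$. Because $F$ is zero-free and of order at most $1$, it equals $e^{\alpha z+\beta}$ for some constants $\alpha,\beta$. If two or more of the $c_{j}$ were nonzero, then, after absorbing any $\lambda_{j}$ equal to $\alpha$ into the right-hand side, $\sum_{j}c_{j}e^{\lambda_{j}z}-e^{\beta}e^{\alpha z}\equiv 0$ would be a nontrivial vanishing linear combination of exponentials with pairwise distinct exponents (whose residual part cannot be constant), contradicting Borel's theorem on exponential sums. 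Hence exactly one $c_{j}$ survives, so $F=c\,e^{\zeta z}$ with $c\ne 0$, and substituting back into $F=F^{(k)}$ forces $\zeta^{k}=1$.

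Finally, from $f^{m}=c\,e^{\zeta z}$ I would fix an $m$-th root $c_{1}$ of $c$ and set $g(z)=c_{1}e^{(\zeta/m)z}$, an entire function with $g^{m}=f^{m}$; then $f/g$ is an entire function satisfying $(f/g)^{m}\equiv 1$, hence a constant $\eta$ with $\eta^{m}=1$, and therefore $f(z)=(\eta c_{1})\,e^{(\zeta/m)z}$, which is the claimed form, with nonzero constant $\eta c_{1}$ and $\zeta^{k}=1$.

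The only step that is not mere multiplicity bookkeeping is the Borel-type argument that collapses the ODE solution to a single exponential; equivalently, one may write $F=e^{h}$ with $h$ a polynomial of degree at most $1$ (as $F$ is zero-free of order $\le 1$) and note that $F=F^{(k)}$ forces the differential polynomial in $h',\dots,h^{(k)}$ — whose leading term is $(h')^{k}$ — to equal the constant $1$, so $h'$ is constant. I expect this collapsing step to be the main obstacle, everything else being routine.
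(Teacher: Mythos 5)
Your proof is correct and follows essentially the same route as the paper: the multiplicity count at zeros and poles of $f$ (using $m\geq k+1$ so that a zero of $f^{m}$ of order $mt$ would be a zero of $(f^{m})^{(k)}$ of order exactly $mt-k$) to show that $0$ and $\infty$ are Picard exceptional values is precisely the paper's argument. You are in fact more complete than the paper, which simply asserts the exponential form at that point; your ODE/Hadamard step supplies the missing detail, though the Borel argument is an unnecessary detour --- once $F=e^{\alpha z+\beta}$ one has $F^{(k)}=\alpha^{k}F$ directly, giving $\alpha^{k}=1$.
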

\begin{proof}
If $f^m=(f^{m})^{(k)}$, then we claim that $0$ and $\infty$ are the Picard exceptional value of $f$.\par
For the proof, if $z_{0}$ is a zero of $f$ of order $t$, then it is zero of $f^m$ and $(f^m)^{(k)}$ of order $mt$ and $(mt-k)$ respectively, which is impossible.\par
Similarly, if $z_{0}$ is a pole of $f$ of order $s$, then it is pole of $f^m$ and $(f^m)^{(k)}$ of order $ms$ and $(ms+k)$ respectively, which is again impossible.\par
Thus $f$ takes the form of $$f(z)=ce^{\frac{\zeta}{m}z},$$
where $c$ is a non-zero constant and $\zeta^{k}=1$.
\end{proof}
\begin{lem}\label{sr1} If $H\equiv 0$, $m\geq k+1$ and $n \geq 4$, then $F\equiv G$.
\end{lem}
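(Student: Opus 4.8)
The starting point is to integrate the hypothesis $H\equiv0$. Since $\frac{F''}{F'}-\frac{2F'}{F-1}=\big(\log\frac{F'}{(F-1)^{2}}\big)'$, the identity $H\equiv0$ forces $\frac{F'}{(F-1)^{2}}=A\,\frac{G'}{(G-1)^{2}}$ for some nonzero constant $A$, and a further integration gives a bilinear relation
\[
\frac{1}{F-1}=\frac{A}{G-1}+B,\qquad A\in\mathbb{C}\setminus\{0\},\ B\in\mathbb{C};
\]
the lemma then reduces to proving $A=1$ and $B=0$. I would use throughout the identities $F-1=-P(f^m)/b$ and $G-1=-P((f^m)^{(k)})/b$, together with two consequences of $m\geq k+1$: every zero of $f$ is a common zero of $f^m$ and $(f^m)^{(k)}$, so that $F=G=0$ there; and at a pole of $f$ of order $p$ the functions $F$ and $G$ have poles of order $mnp$ and $(mp+k)n$ respectively. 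A first reduction is to show that $f$ is entire: if $z_0$ were a pole of $f$, then $1/(F-1)$ and $1/(G-1)$ both vanish at $z_0$, so letting $z\to z_0$ in the bilinear relation forces $B=0$; but then $G-1=A(F-1)$, and comparing pole orders at $z_0$ gives $(mp+k)n=mnp$, that is $k=0$, contradicting $k\geq1$.

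\textbf{Case 1: $f^m$ has a zero.} Evaluating the bilinear relation at a zero of $f$, where $F=G=0$, gives $-1=-A+B$, hence $B=A-1$ and the relation rearranges to $F=\frac{AG}{(A-1)G+1}$. If $A=1$, then $B=0$ and $F\equiv G$, as wanted. If $A\neq1$, this formula forces $F$ to have a pole at every point where $G=1/(1-A)$; since $f$ is entire, $F$ is entire, so $G$ must omit the finite value $1/(1-A)$. Because $G=R((f^m)^{(k)})$, this means $(f^m)^{(k)}$ omits every root of a monic polynomial of degree $n\geq4$ whose coefficient of $w^{n-1}$ is $a\neq0$ and whose coefficients of $w^{n-2},\dots,w$ all vanish; such a polynomial cannot be an $n$-th power, so it has at least two distinct roots, whence $(f^m)^{(k)}$ omits two finite values. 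But $(f^m)^{(k)}$ is a non-constant entire function — otherwise $f^m$ would be a polynomial of degree $<m$, forcing $f$ to be constant — so this contradicts Picard's theorem. Thus $A=1$ and $F\equiv G$ in this case.

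\textbf{Case 2: $f^m$ has no zeros.} Then $f^m=e^{h}$ for a non-constant entire $h$, and $F$ and $G$ are again entire. The bilinear relation gives $F-1=\frac{G-1}{BG+A-B}$; if $B\neq0$ this would force $F$ to have a pole wherever $G=1-A/B$, which is impossible exactly as in Case 1, so $B=0$ and $A\,P(f^m)=P((f^m)^{(k)})$. Writing $(f^m)^{(k)}=R_{k}e^{h}$, where $R_{k}\not\equiv0$ is a differential polynomial in the derivatives of $h$ with $T(r,R_{k})=S(r,e^{h})$, this identity becomes
\[
(R_{k}^{\,n}-A)\,e^{nh}+a\,(R_{k}^{\,n-1}-A)\,e^{(n-1)h}+b\,(1-A)\equiv0 .
\]
Since $nh$, $(n-1)h$ and $0$ have pairwise non-constant differences while the three coefficients are small with respect to $e^{h}$, Borel's theorem forces each coefficient to vanish; in particular $b(1-A)=0$, and $b\neq0$ gives $A=1$, so $F\equiv G$.

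The step I expect to be the main obstacle is Case 2, where one must leave the algebraic manipulation of the bilinear relation, pass to the exponential representation $f^m=e^{h}$, and apply Borel's theorem on exponential sums. One should also be careful — in both cases — to check that the monic polynomials obtained by setting $R(w)$ equal to a constant really have at least two distinct zeros (this is where $n\geq4$, together with $a\neq0$, is used) and that $(f^m)^{(k)}$ is genuinely non-constant (which, in Case 1, is another use of $m\geq k+1$).
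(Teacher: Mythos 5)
Your proof is correct, but after the common first step of integrating $H\equiv0$ into a M\"{o}bius relation between $F$ and $G$ it follows a genuinely different route from the paper's. The paper writes the relation as $F\equiv\frac{AG+B}{CG+D}$, invokes Mokhon'ko's lemma to get $T(r,f^m)=T(r,(f^m)^{(k)})+S(r,f)$, and then runs the Second Fundamental Theorem on $F$ or $G$ in each of several subcases (including a separate treatment of $FG\equiv1$) to force the relation down to $F\equiv G$; the whole argument lives at the level of counting functions, and the poles are only controlled in the weak form $\overline{N}(r,\infty;f)=S(r,f)$. You instead evaluate the bilinear relation at distinguished points: at a pole of $f$ it forces $B=0$, and the pole-order comparison $mnp$ versus $(mp+k)n$ then kills the pole, so $f$ is actually entire --- a stronger intermediate statement than the paper obtains; at a zero of $f$ (where $F=G=0$ because $m\geq k+1$) it pins down $B=A-1$; and the remaining constants are eliminated by Picard's theorem (using that $w^{n}+aw^{n-1}+c$ with $a\neq0$ and $n\geq3$ is never an $n$-th power, so $(f^m)^{(k)}$ would have to omit two finite values) and, in the zero-free case, by Borel's theorem applied to $(R_{k}^{\,n}-A)e^{nh}+a(R_{k}^{\,n-1}-A)e^{(n-1)h}+b(1-A)\equiv0$. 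Your route trades the paper's Nevanlinna bookkeeping for the classical theorems of Picard and Borel and is, if anything, more transparent; the only points that genuinely need the care you already flag are the two-distinct-roots claim (which uses $a\neq0$ and $n\geq3$, comfortably covered by $n\geq4$) and the non-constancy of $(f^m)^{(k)}$ (another use of $m\geq k+1$), and both check out.
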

\begin{proof}
It is easy to observe that $F$ and $G$ share $(1,\infty)$ and $(\infty,0)$. Now we integrate $H\equiv0$ twice and get
\bea\label{pe1.1} F\equiv \frac{AG+B}{CG+D},\eea
where $A,B,C,D$ are constant satisfying $AD-BC\neq0$.\\
Thus by Mokhon'ko's Lemma (\cite{5})
\bea\label{pe1.2} T(r,f^m)=T(r,(f^m)^{(k)})+S(r,f).\eea
Next we consider the following two cases:\\
\textbf{Case-1} $C\neq 0$. \par
Clearly $\overline{N}(r,\infty;f)=S(r,f)$, otherwise if $z_{0}$ be a pole of $f$ with multiplicity $t$, then $z_{0}$ is a pole of $F$ with multiplicity $mnt$ where as $z_{0}$ is removable singularity or analytic point of $\frac{AG+B}{CG+D}$, which is impossible as $n\geq4$ and equation (\ref{pe1.1}) holds.\par
\textbf{Subcase-1.1} $A\neq0$.\\
In this case, (\ref{pe1.1}) can be written as \bea F-\frac{A}{C}=\frac{BC-AD}{C(CG+D)}.\eea
So, $$\overline{N}\left(r,\frac{A}{C};F\right)=\overline{N}(r,\infty;G).$$
Hence using the Second Fundamental Theorem, we get
\beas&& n T(r,f^{m})+O(1)=T(r,F)\\
 &\leq& \overline{N}(r,\infty;F)+\overline{N}(r,0;F)+\overline{N}\left(r,\frac{A}{C};F\right)+S(r,F)\\
&\leq& 2\overline{N}(r,\infty;f)+\overline{N}(r,-a;f^m)+\overline{N}(r,0;f^m)+S(r,f)\\
&\leq& \overline{N}(r,-a;f^m)+\overline{N}(r,0;f^m)+S(r,f),
\eeas
which is a contradiction as $n\geq 3$.\par
\textbf{Subcase-1.2} $A=0$.\\
Then obviously $B\neq0$ and (\ref{pe1.1}) can be written as
\bea\label{arab5} F=\frac{1}{\gamma G+\delta},\eea
where $\gamma=\frac{C}{B}$ and $\delta=\frac{D}{B}$.\\
If $F$ has no 1-point, then by  the Second Fundamental Theorem, we have
\beas && n T(r,f^{m})+O(1)=T(r,F)\\
&\leq& \overline{N}(r,\infty;F)+\overline{N}(r,0;F)+\overline{N}(r,1;F)+S(r,F)\\
&\leq& \overline{N}(r,\infty;f^m)+\overline{N}(r,-a;f^m)+\overline{N}(r,0;f^m)+S(r,f)\\
&\leq& \overline{N}(r,-a;f^m)+\overline{N}(r,0;f^m)+S(r,f),
\eeas
which is a contradiction as $n\geq 3$.\\
Hence $\gamma+\delta=1$ and $\gamma\neq0$. So, equation (\ref{arab5}) becomes,
\beas F=\frac{1}{\gamma G+1-\gamma}.\eeas
Consequently, $\overline{N}\left(r,0;G+\frac{1-\gamma}{\gamma}\right)=\overline{N}(r,\infty;F)$.\\
If $\gamma\neq1$, then using Second Fundamental Theorem and equation (\ref{pe1.2}), we get
\beas && n T(r,(f^{m})^{(k)})+O(1)=T(r,G)\\
 &\leq& \overline{N}(r,\infty;G)+\overline{N}(r,0;G)+\overline{N}\left(r,0;G+\frac{1-\gamma}{\gamma}\right)+S(r,G)\\
&\leq& 2\overline{N}(r,\infty;(f^m)^{(k)})+\overline{N}(r,-a;(f^m)^{(k)})+\overline{N}(r,0;(f^m)^{(k)})+S(r)\\
&\leq& \overline{N}(r,-a;(f^m)^{(k)})+\overline{N}(r,0;(f^m)^{(k)})+S(r),\eeas
which is a contradiction as $n\geq 3$.\\
Hence $\gamma=1$, i.e., $FG\equiv 1$, i.e.,
$$\big(f^{m}\big)^{n-1}\big(f^{m}+a\big)\bigg((f^{m})^{(k)}\bigg)^{n-1}\bigg((f^{m})^{(k)}+a\bigg)\equiv b^{2}.$$
As $f^{m}$ and $(f^{m})^{(k)}$ share $(\infty,0)$, so $\infty$ is the exceptional values of $f^{m}$. Also as zeros of $F$ is neutralized by poles of G, so $0,-a$ are also exceptional values of $f^{m}$. But this is not possible by Second Fundamental Theorem. Thus $FG \not\equiv 1$.\\
\textbf{Case-2} $C=0$.\\
Then $AD\neq0$ and equation (\ref{pe1.1}) can be written as
$$F=\lambda G+\mu,$$
where $\lambda=\frac{A}{D}$ and $\mu=\frac{B}{D}$. Also $F$ and $G$ share $(\infty,\infty)$.\par
Hence in this case, we have
\begin{equation}\label{hindi}
\ol{N}(r,\infty;f)=S(r,f).\end{equation}
Because otherwise, if  $z_{0}$ be a pole of $f$ of order $t$, then it is a pole of $F$ of order $mtn$ and that of $G$ is $(mt+k)n$. But $F$ and $G$ share poles counting multiplicities. Thus $mtn=(mt+k)n$, but $nk\not= 0$ by assumption on $n,k$.\par
Also, proceeding as the previous case, we can conclude that $F$ has atleast one $1$-point. Thus $\lambda+\mu=1$ with $\lambda\neq0$.\par
\textbf{Subcase-2.1} Assume $\lambda\not=1$.\\
Then $\ol{N}(r,0;f)=S(r,f),$ because otherwise, if $z_{0}$ is a zero of $f$, then it is zero of $F$ as well as $G$ as $m\geq k+1$, consequently, $\mu=0$, which is a contradiction.\\
Now using the Second Fundamental Theorem, equations (\ref{pe1.2}), (\ref{hindi}), we get
\beas && n T(r,f^{m})+O(1)=T(r,F)\\
&\leq& \overline{N}(r,\infty;F)+\overline{N}(r,0;F)+\overline{N}(r,1-\lambda;F)+S(r,F)\\
&\leq& \overline{N}(r,0;f^m)+\overline{N}(r,-a;f^m)+\overline{N}\big(r,0;(f^m)^{(k)}\big)+\overline{N}\big(r,-a;(f^m)^{(k)}\big)\\
&+& S(r,f)\\
&\leq& 3  T(r,f^{m})+S(r,f),\eeas
which is a contradiction as $n\geq 4$.\par
\textbf{Subcase-2.2}  $\lambda=1$.\\
Consequently, we have $F\equiv G$.
\end{proof}

\begin{lem}\label{sr2} If $F\equiv G$, then $f^{m}\equiv (f^{m})^{(k)}$ for any $m\geq 1$, $k \geq 1$ and $n \geq 4$.
\end{lem}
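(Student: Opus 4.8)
The plan is to read off from the hypothesis a polynomial identity and then run a pole-counting argument. Since $F=R(f^{m})$, $G=R\bigl((f^{m})^{(k)}\bigr)$ and $R(z)=-(z^{n}+az^{n-1})/b$, the assumption $F\equiv G$ is equivalent to $\bigl(f^{m}\bigr)^{n-1}\bigl(f^{m}+a\bigr)\equiv\bigl((f^{m})^{(k)}\bigr)^{n-1}\bigl((f^{m})^{(k)}+a\bigr)$. Abbreviating $\phi=f^{m}$ and $\psi=(f^{m})^{(k)}$, I would argue by contradiction: suppose $\phi\not\equiv\psi$. A quick preliminary is that $\psi\not\equiv0$ --- otherwise $f^{m}$, and hence $f$, is a polynomial and the identity forces $f^{m}\equiv0$ or $f^{m}\equiv-a$, impossible for non-constant $f$ --- so $h:=\phi/\psi$ is a well-defined meromorphic function with $h\not\equiv1$. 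Substituting $\phi=h\psi$ into the identity and cancelling the factor $\psi^{n-1}$ gives the basic relation $(h^{n}-1)\,\psi\equiv a\,(1-h^{n-1})$.

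From here I would split into two cases. If $h$ is a constant $c\neq1$: when $c^{n}\neq1$ the relation makes $\psi$, and hence $\phi=c\psi$ and $f$, constant --- excluded; when $c^{n}=1$ the relation forces $c^{n-1}=1$ as well (using $a\neq0$), so $c=c^{n}/c^{n-1}=1$, a contradiction. If $h$ is non-constant, then $h^{n}\not\equiv1$ (else $h$ would be a constant root of unity), so the relation can be solved as $\psi\equiv a(1-h^{n-1})/(h^{n}-1)$, exhibiting $\psi$ as a rational function of $h$ whose only poles, in the $h$-variable, lie at the $n-1$ roots $\omega^{j}$ ($1\le j\le n-1$, $\omega=e^{2\pi i/n}$); each such $\omega^{j}$ is a simple root of $z^{n}-1$ and, because $n\nmid j$, is not a root of $z^{n-1}-1$.

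The crux is the pole count. Since $n\ge4$, there are at least three distinct numbers $\omega,\dots,\omega^{n-1}$, and a non-constant meromorphic function omits at most two values, so $h$ attains one of them, say $h(z_{0})=\omega^{j_{0}}$, with $h-\omega^{j_{0}}$ vanishing to some order $p\ge1$ at $z_{0}$. Then the expression for $\psi$ has a pole of order exactly $p$ at $z_{0}$ (the denominator vanishes there to order $p$ while the numerator does not), and since $h(z_{0})=\omega^{j_{0}}\notin\{0,\infty\}$ the product $\phi=h\psi=f^{m}$ also has a pole of order exactly $p$ at $z_{0}$. But a pole of $f^{m}$ of order $p$ is a pole of $(f^{m})^{(k)}=\psi$ of order $p+k>p$ --- contradiction. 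Having reached a contradiction in both cases, $\phi\equiv\psi$, i.e. $f^{m}\equiv(f^{m})^{(k)}$.

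I expect the main obstacle to be handling the pole bookkeeping carefully rather than anything conceptual: one must be sure that a single $\omega^{j_{0}}$-point of $h$ really is a common pole of $\phi$ and $\psi$ of the same order, which is precisely what converts the ``at most two omitted values'' ceiling into the needed contradiction. Note that this is exactly where the hypothesis $n\ge4$ enters (to guarantee at least three of the values $\omega^{j}$), whereas $m\ge1$ and $k\ge1$ are all that the remaining steps require.
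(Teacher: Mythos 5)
Your proof is correct, and its first half follows the same path as the paper: introduce the ratio $h$ of $f^{m}$ and $(f^{m})^{(k)}$ (the paper uses $h=(f^{m})^{(k)}/f^{m}$, you use the reciprocal, which changes nothing) and solve $F\equiv G$ to express one of the two functions as $a$ times a rational function of $h$ whose poles sit at the $n-1$ roots of $z^{n}=1$ other than $1$. Where you diverge is the finishing move. The paper first shows $\overline{N}(r,\infty;f)=S(r,f)$ (pole multiplicities of $F$ and $G$ cannot match under $F\equiv G$), then applies the Second Fundamental Theorem and Mokhon'ko's lemma to $h$ at those $n-1$ values to force $(n-3)T(r,h)\leq S(r,f)$, hence $h$ constant, hence $h=1$. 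You instead invoke only Picard's little theorem: since $n-1\geq 3$, a non-constant $h$ must attain some $\omega^{j_{0}}$ at a point $z_{0}$, and there you get a clean local contradiction because $f^{m}$ and $(f^{m})^{(k)}$ would both have a pole of the same exact order $p$ at $z_{0}$, while differentiation must raise the pole order by $k\geq 1$. Your route is more elementary (no Nevanlinna machinery in the endgame) and makes the role of $k\geq 1$ explicit and pointwise, whereas the paper's SFT argument is the one that generalizes to situations where one only controls counting functions up to $S(r)$ rather than excluding individual points. Your handling of the degenerate cases ($\psi\equiv 0$, $h$ constant, $h^{n}\equiv 1$) is complete, so there is no gap.
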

\begin{proof}
We see that $F$ and $G$ share $(1,\infty)$ and $(\infty,\infty)$; $\ol{N}(r,\infty;f)=S(r,f).$\par Let $\lambda_{i}$ $(i=1,2,...,n-1)$ be the non-real distinct zeros of $h^{n}-1=0$.\par
Next we put $h:=\frac{(f^{m})^{(k)}}{f^{m}}$ in $F\equiv G$ and get $f^{m}=-a\frac{h^{n-1}-1}{h^{n}-1}$.\par
If $h$ is non-constant meromorphic function, then by Second Fundamental Theorem and Mokhon'ko's Lemma, we have
\beas &&(n-3)T(r,h)\\
&\leq& \sum\limits_{i=1}^{n-1}\ol{N}(r,\lambda_{i};h)+S(r,h)\\
&\leq& \ol{N}(r,\infty;f)+S(r,f)\\
&\leq& S(r,f),
\eeas
which is a contradiction as $n\geq 4$.\\
Thus $h$ is constant. Consequently the only possibility of $h$ is 1 and hence $f^{m}\equiv (f^{m})^{(k)}$.
\end{proof}
\section {Proof of the theorems}
\begin{proof} [\textbf{Proof of Theorem \ref{thB1} and \ref{thB2}}] We consider two cases:\\
\textbf{Case-1} Assume $H \not\equiv 0$. Then $F \not\equiv G$ and $$\overline{N}(r,1;F|=1)=\overline{N}(r,1;G|=1)\leq N(r,\infty;H).$$
Now using the Second Fundamental Theorem and  Lemma \ref{bc1234}, we get
\bea\label{pe1.1.1}&& (n+1) \{T(r,f^m)+T(r,(f^m)^{(k)})\}\\
\nonumber &\leq& \overline{N}(r,\infty;f^{m})+\overline{N}(r,\infty;(f^{m})^{(k)})+\overline{N}(r,0;f^{m})+\overline{N}(r,0;(f^{m})^{(k)})\\
\nonumber &+& \overline{N}\left(r,-a\frac{n-1}{n};f^m\right)+\overline{N}\left(r,-a\frac{n-1}{n};(f^m)^{(k)}\right)+\overline{N}(r,1;F)\\
\nonumber &+& \overline{N}(r,1;G)- N_{0}(r,0,(f^m)')-N_{0}\left(r,0,(f^m)^{(k+1)}\right)+S(r)\\
\nonumber &\leq& 3\{\overline{N}(r,\infty;f)+\overline{N}(r,0;(f^m)^{(k)})\}+\overline{N}(r,1;F)+\overline{N}(r,1;G)\\
\nonumber &+& 2T(r) -\overline{N}(r,1;F|=1)+\overline{N}_{L}(r,1;F)+\overline{N}_{L}(r,1;G)+S(r).
\eea
Thus in view of Lemma \ref{ma}, we get
\bea\label{pe1.1.1} \left(\frac{n}{2}-1\right)T(r) &\leq& 3\{\overline{N}(r,\infty;f)+\overline{N}(r,0;(f^m)^{(k)})\}\\
\nonumber &+& \left(\frac{3}{2}-l\right)\{\overline{N}_{L}(r,1;F)+\overline{N}_{L}(r,1;G)\}+S(r).
\eea
\textbf{Subcase-1.1} $l\geq2$.\\
Now using Lemmas \ref{bc121} and \ref{bc123} in (\ref{pe1.1.1}), we get
\bea\label{pe1.1.1ma}&& \left(\frac{n}{2}-1\right)T(r)\leq \frac{3(n-1)l}{(n-2)l-1}\overline{N}(r,\infty;f)+S(r).
\eea
That is,
\bea\label{pe1.1.1mab}&& (\frac{n}{2}-1)T(r)\\
\nonumber &\leq& \frac{3(n-1)l^{2}}{(2nl-l-1)(nl-2l-1)-(l+1)^{2}}T(r)+S(r)\\
\nonumber &\leq& \frac{3(n-1)l}{2n^{2}l-5nl-3n+l+1}T(r)+S(r).
\eea
Thus if $n\geq 4$ \big( resp. $(n-2)(2n^{2}l-5nl-3n+l+1)>6(n-1)l$\big) and  $f$ is entire (resp. meromorphic) function, then the inequality (\ref{pe1.1.1ma}) (resp. \ref{pe1.1.1mab})  leads to a contradiction.\\
\textbf{Subcase-1.2} $l=1$.\\
Applying Lemmas \ref{l.n.1m}, \ref{bc121} and \ref{bc123} in (\ref{pe1.1.1}), we get
\bea\nonumber \left(\frac{n}{2}-1\right)T(r) &\leq& 3\{\overline{N}(r,\infty;f)+\overline{N}(r,0;(f^m)^{(k)})\}\\
\nonumber &+& \frac{1}{2}\{\overline{N}(r,0;f)+\overline{N}(r,\infty;f)\}+S(r)\\
\label{pe1.1.1maa} &\leq& \frac{7(n-1)}{2(n-3)}\overline{N}(r,\infty;f)+S(r)\\
\label{pe1.1.1maab}&\leq& \frac{7(n-1)}{2\{(2n-2)(n-3)-4\}}T(r)+S(r).
\eea
Thus the inequality (\ref{pe1.1.1maa}) ( resp. \ref{pe1.1.1maab}) leads to a contradiction if $f$ is entire (resp. meromorphic) function and $n\geq 4$ (resp. $5$).\\
\textbf{Subcase-1.3} $l=0$.\\
Using Lemmas \ref{l.n.1}, \ref{bc121} and \ref{bc123} in (\ref{pe1.1.1}), we get
\bea\nonumber \left(\frac{n}{2}-1\right)T(r) &\leq& 3\{\overline{N}(r,\infty;f)+\overline{N}(r,0;(f^m)^{(k)})\}\\
\nonumber &+& 3\{\overline{N}(r,0;(f^{m})^{(k)})+\overline{N}(r,\infty;f)\}+S(r)\\
\label{pe11.1.1maa} &\leq& \frac{6(n-1)}{(n-4)}\overline{N}(r,\infty;f)+S(r)\\
\label{pe11.1.1maab} &\leq& \frac{6(n-1)}{2n^{2}-11n+3}T(r)+S(r).
\eea
Thus the inequality (\ref{pe11.1.1maa}) (resp. \ref{pe11.1.1maab}) leads to a contradiction if $f$ is entire (resp. meromorphic) function and $n\geq 5$ (resp. $7$).\\
\textbf{Case-2} Next we assume that $H\equiv 0$. Then by Lemmas \ref{sr1} and \ref{sr2}, we have $f^m=(f^{m})^{(k)}$. Now by applying Lemma \ref{abc121}, we see that $f$ takes the form  $$f(z)=ce^{\frac{\zeta}{m}z},$$
where $c$ is a non-zero constant and $\zeta^{k}=1$. This completes the proof.
\end{proof}
\section{Acknowledgement}
The first author's research work is supported by the Council Of  Scientific and Industrial Research, Extramural Research Division, CSIR Complex, Pusa, New Delhi-110012, India, under the sanction project no. 25(0229)/14/EMR-II. \par
The second author's research work is supported by the Department of Science and Technology, Govt. of India under the sanction order DST/INSPIRE Fellowship/2014/IF140903.

\vspace{0.2 in}

\noindent  ABHIJIT BANERJEE,\\
Department of Mathematics, University of Kalyani, India 741235.\\
e-mail address: {\it {abanerjee\_kal@yahoo.co.in, abanerjeekal@gmail.com}}
\vspace{0.1 in}\\
\noindent  BIKASH CHAKRABORTY,\\
Department of Mathematics, University of Kalyani, India 741235.\\
Present Address :\\
Department of Mathematics, Ramakrishna Mission Vivekananda\\ Centenary College, West Bengal 700118, India.\\
e-mail address: {\it {bikashchakraborty.math@yahoo.com} }
\end{document}